\documentclass{amsart}
\makeatletter
\def\@@and{}
\makeatother
\usepackage{mathrsfs}
\usepackage{amsfonts}

\usepackage{amsfonts, amsthm, amssymb}
\usepackage{amssymb,amsfonts,amsmath,color,
latexsym, epsfig,cite, psfrag,eepic,colordvi,enumitem}
\usepackage{amscd,graphics,url}
\usepackage{lineno}
\usepackage[hidelinks]{hyperref}
\usepackage[ruled,vlined]{algorithm2e}
\usepackage{xcolor}
\usepackage{hyperref}
\hypersetup{hidelinks}
\hypersetup{
    colorlinks=true,
    urlcolor=blue!60!black,
    linkcolor=blue!60!black,
    citecolor=blue!60!black
}

\SetKwInOut{Input}{Input}
\SetKwInOut{Output}{Output}
\SetKw{KwRet}{return}

\allowdisplaybreaks

\newtheorem{theorem}{Theorem}[section]
\newtheorem{lemma}[theorem]{Lemma}

\newtheorem{proposition}[theorem]{Proposition}

\newtheorem{definition}[theorem]{Definition}

\title{\textbf{Perfect matching in 4-partite 4-uniform hypergraphs}}
\author{Hongliang Lu}
\address{School of Mathematics and Statistics,
Xi'an Jiaotong University,
Xi'an 710049, China}
\email{luhongliang215@sina.com}

\author{Yan Wang}
\address{School of Mathematical Sciences, CMA-Shanghai,
Shanghai Jiao Tong University,
Shanghai 200240, China}
\email{yan.w@sjtu.edu.cn}

\author{Feihong Yuan}
\address{School of Mathematics and Statistics,
Xi'an Jiaotong University,
Xi'an 710049, China}
\email{fhyuan1@gmail.com}
\begin{document}
\maketitle
\vspace{-1em}
\begin{abstract}
A balanced $k$-partite $k$-graph is  a $k$-uniform
hypergraph such that every edge intersects each partition
class in exactly one vertex, where each partition class has size $n$.
Lo and Markstr\"om (2014)
determined the minimum vertex-degree threshold for perfect matchings in
balanced \(3\)-partite \(3\)-graphs.  In this paper, we determine the
minimum vertex-degree threshold for balanced \(4\)-partite \(4\)-graphs. The proof relies on a reduction framework for \(k\)-partite \(k\)-graphs, through which the existence of a perfect fractional matching is converted into a finite-dimensional optimization problem. 
\end{abstract}

\section{Introduction}

A matching in a hypergraph is a collection of pairwise disjoint edges.  One of
the central problems in extremal hypergraph theory is to determine which
degree conditions force the existence of a large matching or a perfect matching.
For graphs, a celebrated result of Dirac~\cite{dirac1952some} states that a graph on $n$ vertices has a perfect matching if every vertex has degree at least $n/2$.  For uniform hypergraphs, however, the problem becomes significantly
more delicate, since one can consider different types of degree conditions and new extremal constructions may appear.  

A $k$-uniform hypergraph (or $k$-graph for short) is a pair $H=(V,E)$ where $V$ is a finite set of vertices and $E\subseteq \binom{V}{k}$ is a family of $k$-element subsets of $V$, called edges. A matching in $H$ is a set of vertex-disjoint edges, and it is perfect if it covers all vertices of $H$.

For a set $S$ of vertices, the degree of $S$ in $H$, denoted by $d_H(S)$, is the number of edges containing $S$.  For an integer $\ell$ with $0\le \ell\le k-1$, the minimum $\ell$-degree $\delta_\ell(H)$ of $H$ is the minimum of $d_H(S)$ over all $\ell$-sets $S$ of vertices.  In particular, $\delta_0(H)=e(H)$ is the number of edges in $H$, and $\delta_1(H)=\delta(H)$ is the minimum vertex degree.
We define $m_l(k,n)$ to be the smallest integer $m$ such that every $k$-graph on $n$ vertices with $\delta_l(H)\ge m$ contains a perfect matching. 

For $\ell=0$, the problem is closely related to the classical Erd{\H{o}}s
Matching Conjecture~\cite{erdos_1965}.  For integers $k\ge 2$ and $s\ge 1$, let
$\operatorname{ex}_k(n,s)$ denote the maximum number of edges in a $k$-graph
on $n$ vertices with matching number less than $s$.  Erd{\H{o}}s conjectured
that
\[
        \operatorname{ex}_k(n,s)
        =
        \max\left\{
        \binom{ks-1}{k},
        \binom{n}{k}-\binom{n-s+1}{k}
        \right\}.
\]
The conjecture has been studied extensively.  The case $k=2$ follows from a classical
theorem of Erd{\H{o}}s and Gallai~\cite{erdos_gallai_1959}.
For the general case, this conjecture is true when $n$ is sufficiently large compared to $s$ and $k$; see
\cite{bollobas_daykin_erdos_1976,frankl_2017_maximum,MR2912790}.  More recently, Frankl and Kupavskii~\cite{fk_concentration_2022} proved the
conjecture in a wider range, in particular when $ n\ge \frac{5}{3}sk-\frac{2}{3}s$
and $s$ is sufficiently large.  Notable progress has also been achieved in several important cases.
Frankl~\cite{frankl_2017_maximum} settled the case $k=3$, while Frankl, Lu, Ma and Wu~\cite{frankl_lu_ma_wu_2026} established the corresponding $4$-uniform result in the range $n\ge 5s$ for all sufficiently large $n$.
The almost perfect matching case was resolved by Kolupaev and Kupavskii~\cite{kolupaev_kupavskii_2023}.

For other degree conditions, that is, for $1\le \ell\le k-1$, the threshold
is governed by two standard obstructions: the space barrier, consisting of all
edges intersecting a fixed set of size slightly smaller than $n/k$, and the
divisibility or parity barrier, which contains half of all the possible edges.  These lead to the following conjecture (see \cite{han_person_schacht_2009})
\[
        m_\ell(k,n)
        \sim
        \max\left\{
        \frac12,\,
        1-\left(1-\frac1k\right)^{k-\ell}
        \right\}
        \binom{n-\ell}{k-\ell}.
\]
The codegree case $\ell=k-1$ was determined exactly, for every fixed
$k\ge 3$ and sufficiently large $n$, by R{\"o}dl, Ruci{\'n}ski and
Szemer{\'e}di~\cite{rodl_rucinski_szemeredi_2009}.  For
$k/2\le \ell<k$, Pikhurko~\cite{pikhurko_2008} proved the asymptotically sharp threshold $m_\ell(k,n)\sim \frac12\binom{n-\ell}{k-\ell}$,
which was later sharpened to exact thresholds by Treglown and
Zhao~\cite{treglown_zhao_2012,treglown_zhao_2013}.  The range
$\ell<k/2$ is more delicate.  For vertex degree, the asymptotically sharp
threshold for $3$-graphs was obtained by H{\`a}n, Person and
Schacht~\cite{han_person_schacht_2009}, and the exact threshold was later
determined independently by K{\"u}hn, Osthus and
Treglown~\cite{kuhn_osthus_treglown_2013} and by Khan~\cite{khan_2013}; Khan
also determined the exact vertex-degree threshold for
$4$-graphs~\cite{khan_2016}.  General connections with the Erd{\H{o}}s
matching problem and fractional matching thresholds were developed in
\cite{alon_frankl_huang_rodl_rucinski_sudakov_2012,kuhn_osthus_townsend_2014,treglown_zhao_2016}.

\subsection{Matchings in $k$-partite $k$-graphs}
A $k$-partite $k$-graph is a $k$-uniform hypergraph whose vertex set is partitioned
into classes $V_1,\ldots,V_k$ and every edge contains exactly one vertex from each
class.  We say that it is balanced if
$
        |V_1|=\cdots=|V_k|=n .
$
For $0\le \ell\le k-1$, let $\delta_\ell(H)$ denote the minimum $\ell$-degree of
$H$.  More precisely, $\delta_\ell(H)$ is the minimum of $d_H(S)$ over all
$\ell$-sets $S$ which intersect every partition class in at most one vertex.  Define
$m'_\ell(k,n)$ to be the smallest integer $m$ such that every balanced
$k$-partite $k$-graph $H$ with
$\delta_\ell(H)\ge m$
contains a perfect matching.

Kühn and Osthus~\cite{kuhn_osthus_2006} proved a Dirac-type theorem for
perfect matchings in dense hypergraphs (including the balanced partite setting), 
showing that a codegree condition of order $n/2+\sqrt{2n\log n}$ forces a perfect matching.  Aharoni, Georgakopoulos and
Sprüssel~\cite{aharoni_georgakopoulos_sprussel_2009} later proved that $m_{k-1}'(k,n)\le n/2+1$. Given a set $L\in\binom{[k]}{\ell}$, an $\ell$-set $T\in\binom{V}{\ell}$ is an $L$-tuple if $|T\cap V_i|=1$ for all $i\in L$, and let $\delta_L(H)=\min d_H(T)$ over all $L$-tuples $T$.  Pikhurko~\cite{pikhurko_2008} proved that if $L\subseteq [k]$ with $|L|=\ell$, and
$
        \frac{\delta_L(H)}{n^{k-\ell}}
        +
        \frac{\delta_{[k]\setminus L}(H)}{n^\ell}
        \ge 1+o(1),
$
then the balanced $k$-partite $k$-graph $H$ contains a perfect matching.
Consequently, it implies that for $k/2\le \ell<k$, $m'_\ell(k,n)\sim \frac12 n^{k-\ell}$.
For the vertex-degree problem, Lo and Markström~\cite{lo_markstrom_2014} determined $m'_1(3,n)$.
For codegree thresholds, Lu, Wang and Yu~\cite{lu_wang_yu_2019} characterized the extremal balanced
$k$-partite $k$-graphs with $\delta_{k-1}(H)\ge n/2$ and no perfect matching,
thereby giving a sharp answer to a question of Rödl and Ruciński. 
For almost perfect
matchings, Han, Zang and Zhao\cite{han_zang_zhao_2019}, and independently Lu, Wang and Yu~\cite{lu_wang_yu_2018}, determined
the minimum codegree threshold in balanced $k$-partite $k$-graphs.

\subsection{Extremal constructions and main result}  We follow the notation of Lo and Markstr{\"o}m~\cite{lo_markstrom_2014}.
Let $d_1,d_2,\dots,d_k,n$ be integers such that $0\le d_i\le n$. let $V_1,\ldots,V_k$ be pairwise disjoint vertex sets with $|V_i|=n$ for every
$i\in[k]$. For all $i\in[k]$, let $W_i$ be a subset of $V_i$ with $|W_i|=d_i$. We denote by $ H_k(n;d_1,\ldots,d_k)$
the balanced $k$-partite $k$-graph on $V_1\cup\cdots\cup V_k$ whose edge set
consists of all legal $k$-sets intersecting $W_1\cup\cdots\cup W_k$. Define $H_k(n;m)$ (where $m=\sum_{i=1}^k d_i$) to be $H_k(n;d_1,d_2,\dots,d_k)$ with $d_i=\lfloor(m+i-1)/k\rfloor$ for all $i\in[k]$. We also define $H_k^0(n;m)$ to be the $k$-partite $k$-graph obtained from  $H_k(n;m)$ by deleting all edges contained in $W_1\cup W_2\cup \cdots \cup W_k$.
Let
\[
\mathscr{H}_k\!\left(n;m;\left\lceil\frac{m}{k}\right\rceil\right)
=
\left\{
H_k(n;d_1,\ldots,d_k):
\sum_{i=1}^k d_i=m
\text{ and }
d_i\le \left\lceil\frac{m}{k}\right\rceil
\text{ for all } i\in[k]
\right\}.
\]

Write $ m=rk+s$, where $r\ge 0$ and $s\in\{1,\dots,k\}$ are integers,
and put $ t=\left\lfloor m/(r+1)\right\rfloor$.
Define integers $a_1,\ldots,a_k$ by
\[
        a_i=
        \begin{cases}
        r+1, & 1\le i\le t,\\
        m-(r+1)t, & i=t+1,\\
        0, & t+2\le i\le k,
        \end{cases}
\]
where the middle line is omitted if $t=k$.  Let
$
        H'_k(n;m)=H_k(n;a_1,\ldots,a_k),
$
and set
$
        d_k(n,m)=\delta_1\bigl(H'_k(n;m)\bigr).
$
A direct calculation gives
\begin{equation}\label{eq:dk}
    d_k(n,m)=n^{k-1}-n^{\max\{0,k-t-1\}}(n-r-1)^{t-1}(n-m+(r+1)t)^{\min\{1,k-t\}}.
\end{equation}
When $k=4$ and $m=n-1$,
\eqref{eq:dk} yields, for $n\ne2,3,6$,
\begin{equation*}
    d_4(n,n-1)=
    \begin{cases}
    n^3-(\frac{3n}{4})^2(\frac{3n}{4}+1),
        & \text{if } n\equiv0\pmod{4},\\[2mm]
    n^3-(\frac{3n}{4}+\frac{1}{4})^3,
        & \text{if } n\equiv1\pmod{4},\\[2mm]
    n^3-(\frac{3n}{4}-\frac{1}{2})^2(\frac{3n}{4}+\frac{5}{2}),
        & \text{if } n\equiv2\pmod{4},\\[2mm]
    n^3-(\frac{3n}{4}-\frac{1}{4})^2(\frac{3n}{4}+\frac{7}{4}),
        & \text{if } n\equiv3\pmod{4}.
    \end{cases}
\end{equation*}

In this paper, we determine $m'_1(4,n)$ for all sufficiently large $n$.
\begin{theorem}\label{main_theorem}
   There exists an integer $n_0$ such that for every integer $n\ge n_0$, we have $m'_1(4,n)=d_4(n,n-1)+1$.
\end{theorem}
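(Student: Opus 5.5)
The plan has two halves: a lower-bound construction showing $m'_1(4,n)>d_4(n,n-1)$, and an absorbing-method proof of the upper bound.

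\emph{Lower bound.} I would use $H'_4(n;n-1)=H_4(n;a_1,\dots,a_4)$. Every edge meets $W=W_1\cup\cdots\cup W_4$, and $|W|=\sum a_i=n-1<n$. Any perfect matching consists of $n$ disjoint edges, each using at least one vertex of $W$, so no perfect matching exists. By definition its minimum vertex degree is $d_4(n,n-1)$, and~\eqref{eq:dk} gives the displayed formula. Hence $m'_1(4,n)\ge d_4(n,n-1)+1$.

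\emph{Upper bound.} Let $H$ be balanced $4$-partite with $\delta_1(H)\ge d_4(n,n-1)+1$. Note $d_4(n,n-1)\approx(1-(3/4)^3)n^3=\tfrac{37}{64}n^3$. The proof follows the usual absorbing scheme, split into a non-extremal and an extremal case. Fix constants $\varepsilon\ll\gamma$. Call $H$ \emph{$\gamma$-extremal} if it contains a set $S$ of about $n/4$ vertices from each class such that $H[V\setminus S]$ has at most $\gamma n^4$ edges, i.e.\ $H$ is close to $H_4(n;n)$.

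In the non-extremal case I would proceed in three steps.
\begin{enumerate}
\item Build an absorbing matching $M_0$ of size at most $\varepsilon n$. It can absorb any balanced set of $4$ vertices (one per class). For this I would show that every such $4$-tuple has $\Omega(n^{c})$ absorbing structures, using the degree bound, which is far above $n^3/2$, to find many common neighbourhoods, and then pick $M_0$ randomly.
\item Cover all but $o(n)$ vertices of $H-V(M_0)$ by an almost perfect matching. Via the weak regularity lemma this reduces to finding an almost perfect fractional matching in the cluster hypergraph, which inherits minimum vertex degree about $\tfrac{37}{64}$ of full.
\item Absorb the leftover into $M_0$.
\end{enumerate}

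The crux is the fractional statement: every balanced $4$-partite $4$-graph with $\delta_1\ge(\tfrac{37}{64}-\gamma)n^3$ that is not $\gamma$-extremal has a perfect fractional matching, indeed one with slack. This is where I would use the reduction framework announced in the abstract. By LP duality, failure gives a fractional vertex cover $w$ of total weight less than $n$. By symmetrisation and extreme-point arguments, I would reduce to covers taking few distinct values on each class, say $w$ constant on a bounded number of parts per class. The degree condition then becomes a finite-dimensional optimisation over part sizes and weights. One must show its optimum forces either total weight at least $n$ or the structure of $H_4(n;n)$, which is the extremal case.

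In the extremal case, $H$ is close to $H_4(n;n-1)$ with $W$ replaced by $S$. I would proceed as follows.
\begin{enumerate}
\item Move atypical vertices between $S$ and $V\setminus S$.
\item Cover bad vertices greedily by disjoint edges each using exactly one vertex of $S$.
\item Match the remainder by edges with exactly one vertex in $S$, via a balanced-bipartite Hall-type or random greedy argument.
\end{enumerate}
The exact bound $d_4(n,n-1)+1$ is used to guarantee either $|S|\ge n$ after adjustment, or one edge inside $V\setminus S$, or two disjoint edges meeting $S$ twice, which fixes the deficit. The size distribution among the $S_i$ matters, since the $a_i$ are unbalanced. This explains the four residue cases, which must each be checked to handle the optimal unbalanced distribution of $|S_i|$.

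I expect the main obstacle to be the finite-dimensional optimisation in the fractional step, ruling out all near-threshold non-extremal configurations. Examples are parity-type or mixed constructions, where several weight levels interact across four classes. I would attack it by showing that an optimal cover uses weights in $\{0,\tfrac13,\tfrac12,1\}$-type patterns, then by case analysis with computer-checked inequalities.
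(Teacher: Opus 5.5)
Your lower bound and overall architecture match the paper: the close/non-close split, absorption, and an almost perfect matching obtained from perfect fractional matchings. Weak regularity is an acceptable substitute for the paper's edge-disjoint fractional matchings plus the Frankl--R\"odl nibble.

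\textbf{The fractional step.} This is the step you call the crux, and the proposal gives no mechanism for turning ``no perfect fractional matching'' plus a \emph{minimum-degree} hypothesis into a finite problem. Symmetrising a cover on a class $V_i$ (copying the weights and neighbourhoods of one half of $V_i$ onto the other) controls only the \emph{total} edge count, and only on average over the two choices. Degrees of vertices in the other classes can drop, so $\delta_1$ is not preserved. You do not say which polytope the extreme-point argument lives on: nothing you state bounds the number of values of a vertex of the cover LP of a fixed $H$, and $\delta_1(H_g)\ge cn^3$ is not a linear condition on $g$. The $\{0,\frac13,\frac12,1\}$ guess is also unsupported.

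The missing idea is the paper's reduction to a single link. Take a minimum cover $g$ and a class $V_1$ of maximum $g$-weight. If $g>0$ on all of $V_1$, complementary slackness forces a maximum fractional matching to saturate $V_1$, so it is perfect. Hence some $v_0\in V_1$ has $g(v_0)=0$, and $g$ covers $L=N_H(v_0)$ with $e(L)\ge\delta_1(H)$ and $g(V(L))+\max_{i\ge2}g(V_i)\le g(V(H))<n$. Only one degree is now used, and the question becomes an edge-count problem for $3$-partite $3$-graphs. The extra term $\max_i g(V_i)$, which comes from choosing $V_1$ heaviest, is essential. Under $g(V(L))<n$ alone, $L$ could be all triples meeting a set of about $n/3$ vertices per class, which has about $\frac{19}{27}n^3>\frac{37}{64}n^3$ edges.

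Edge count is exactly what half-duplication controls. We have $e(H_1)+e(H_2)=2e(H)$, each step costs one unit of slack, and $f(n,m+1)\le f(n,m)+8n^2$. So $O(\log n)$ steps per class reach two-valued covers at negligible cost. The resulting eight-type problem is settled by $\prod_i(1-t_i)\ge\frac{27}{64}$ whenever $\sum_i t_i+\max_i t_i\le1$ (seven types), and by a computer check (at most six types). The structure on $V_1$ is then read off from the degree of a low-weight vertex of $V_4$.

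\textbf{The extremal case.} Your account of how the exact bound enters is also off: you say it yields one edge inside $V\setminus S$ or two edges meeting $S$ twice. The deficit is not of size one or two. The paper puts all bad vertices of $W$ into a set $B_0\subseteq W$ of size $kc+s+1$, where $c=\lceil k\sqrt{\varepsilon}\,n\rceil$ and $n-1=rk+s$. The typical part $W'=W\setminus B_0$ then has size $n-\Theta(\sqrt{\varepsilon}\,n)$, and one needs a matching of linear size $kc+s+1$ avoiding $W'$. The paper obtains it from the Lo--Markstr\"om partial-matching theorem ($\delta_1>d_k(n,m)$ forces a matching of size $m+1$), applied to $H-W'$ after verifying $\delta_1(H-W')>d_k(n-r+c,kc+s)$ from the closed formula for $d_k$. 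This also handles the unbalanced $a_i$ uniformly, with no residue-by-residue check. A single edge outside $S$ does not close this deficit, and nothing in your outline produces the required linear-size matching.
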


The lower bound is given by the extremal construction above.  Indeed,
$H'_4(n;n-1)$ has minimum vertex degree $d_4(n,n-1)$ and contains no perfect
matching. The proof of Theorem~\ref{main_theorem} is divided into two parts, according to whether the given hypergraph
is close to $H_4^0(n;n)$. For $\alpha>0$, we say that a balanced $4$-partite $4$-graph $H$ is
$\alpha$-close to $H_4^0(n;n)$ if
$
        |E(H_4^0(n;n))\setminus E(H)|\le \alpha n^4.
$ 
In this paper, we write \(a=b\pm c\) to mean
$b-c\le a\le b+c$.

\subsection*{Organization}
In Section~2, we develop the main reduction method of the paper. More precisely, we reduce the non-extremal fractional matching problem to a weighted stability problem for 3-partite 3-graphs, and then further reduce it, via the half-duplication procedure, to a finite-dimensional optimization problem. In Section~3, we prove the non-close case. In Section~4, we prove the close case using a stability result for small matchings developed by Lo and Markström~\cite{lo_markstrom_2014}. Finally, in Section~5, we combine the close and non-close cases to prove Theorem~\ref{main_theorem}.

\section{Perfect fractional matchings}
In this section, we prove the non-extremal fractional matching result (Lemma~\ref{thm:frac-mat}).  The
main ingredient is a stability lemma (Lemma~\ref{lem:frac-3-mat0}) for $3$-partite $3$-graphs.

Let $H$ be a $k$-graph.  A\textit{ fractional matching} of $H$ is a function
$f:E(H)\to[0,1]$ such that $\sum_{e\ni v}f(e)\le 1$ for every $v\in V(H)$,
and its size is $f(E(H))=\sum_{e\in E(H)}f(e)$. A fractional matching of maximum possible size is called a maximum fractional matching of $H$.  It is perfect if
$f(E(H))=|V(H)|/k$.  A \textit{fractional vertex cover} of $H$ is a function
$g:V(H)\to[0,1]$ such that $\sum_{v\in e}g(v)\ge 1$ for every $e\in E(H)$,
and its size is $g(V(H))=\sum_{v\in V(H)}g(v)$. For a subset $S\subseteq V(H)$, we write $g(S)=\sum_{v\in S}g(v)$. A fractional vertex cover of minimum possible size is called a minimum fractional vertex cover of $H$.

Let $H$ be a $k$-partite $k$-graph with vertex classes $V_1,\ldots,V_k$, and let $g:V(H)\to[0,1]$ be a fractional vertex cover of $H$. We say that $H$ is \emph{stable} with respect to $g$ if \[ E(H)= \left\{ e\subseteq V(H): |e\cap V_i|=1 \text{ for every } i\in[k], \text{ and } g(e)\ge 1 \right\}. \]
\begin{theorem}\label{thm:frac-mat}
Let $0<\gamma\ll\varepsilon\ll1$, and let $n$ be sufficiently large.
Let $H$ be a $4$-partite $4$-graph with vertex classes
$V_1,V_2,V_3,V_4$, each of size $n$. Suppose that
\[
        \delta_1(H)\ge \left(\frac{37}{64}-\gamma\right)n^3.
\]
If $H$ is not $\varepsilon$-close to $H_4^0(n;n)$, then $H$ contains a
perfect fractional matching.
\end{theorem}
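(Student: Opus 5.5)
The plan is to use LP duality to turn the statement into a statement about fractional vertex covers, and then to show that a cheap cover is incompatible with the degree condition unless $H$ is close to $H_4^0(n;n)$.

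\textbf{Duality and reduction to a stable hypergraph.} By LP duality, $H$ has no perfect fractional matching if and only if its minimum fractional vertex cover $g$ has $g(V(H))<n$. So I argue by contradiction and fix such a minimum cover $g$. Let $H_g$ be the $4$-partite $4$-graph on the same classes whose edges are all legal $4$-sets $e$ with $g(e)\ge 1$. Then $H\subseteq H_g$, so $\delta_1(H_g)\ge(\frac{37}{64}-\gamma)n^3$, and $H_g$ is stable with respect to $g$.

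The transfer of non-closeness from $H$ to $H_g$ needs care, because adding edges can only bring a graph closer to $H_4^0(n;n)$. I would handle this by proving the stability conclusion directly in terms of $g$. The target is: $g$ is within $o(1)$, in $\ell_1$ over $V(H)$ normalised by $n$, of the indicator of a set $W=W_1\cup\cdots\cup W_4$ with $|W_i|\approx n/4$. Once that holds, the degree condition forces almost every edge of $H_4^0(n;n)$ to lie in $H$, not merely in $H_g$. The reason is that for a typical $v\notin W$, the threshold $\frac{37}{64}=1-(\frac34)^3$ is attained essentially only by the edges through $v$ that meet $W$. This shows $H$ itself is $\varepsilon$-close, a contradiction.

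\textbf{Reduction to a weighted $3$-partite problem.} Next I would reduce to a weighted problem one dimension down. For $v\in V_4$, the link of $v$ in $H_g$ is the stable $3$-partite $3$-graph on $V_1,V_2,V_3$ determined by the threshold $1-g(v)$. Its edge count is at least $(\frac{37}{64}-\gamma)n^3$. Summing over $v\in V_4$ weighted appropriately, and using $g(V_4)\le n-g(V_1\cup V_2\cup V_3)$, this becomes a statement about one weighted function on $V_1\cup V_2\cup V_3$: the stability lemma, Lemma~\ref{lem:frac-3-mat0}. Its required conclusion is that if the total weight is less than what the $37/64$ bound demands, then $g$ restricted to $V_1\cup V_2\cup V_3$ is close to a $\{0,1\}$-indicator with about $n/4$ ones per class.

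\textbf{Finite-dimensional optimisation and the main obstacle.} To prove the weighted stability lemma I would discretise. Sorting the vertices of each class by $g$-value and applying the half-duplication procedure moves each $g$-value to one of the two nearest critical values without decreasing degrees or increasing $g(V)$. This reduces $g$ to finitely many levels, such as $\{0,\frac13,\frac12,\frac23,1\}$, with unknown class proportions. The problem then becomes maximising $\min_v d(v)/n^3$ over a polytope of proportion vectors subject to $g(V)\le n$.

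One checks that the optimum equals $\frac{37}{64}$ and is attained uniquely at the profile with weight $1$ on a quarter of each class and $0$ elsewhere. A compactness argument then gives stability: every near-optimal profile is near this one. I expect this finite optimisation to be the main obstacle, since there are many competing configurations (for example uniform weight $\frac14$, or half-integral covers) that must each be ruled out by explicit polynomial inequalities. The step I would attempt first is a case analysis on the largest level used, because a level of $1$ lets one compare directly with the space barrier.
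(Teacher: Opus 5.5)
Your outline has the right architecture (duality, the link of a single vertex, a weighted $3$-partite stability problem, then closeness from the degree condition at light vertices). However, the reduction to Lemma~\ref{lem:frac-3-mat0} is not actually carried out, and as described it fails. ``Summing over $v\in V_4$ weighted appropriately'' does not produce a single pair satisfying the lemma's hypothesis. The link of $v$ is covered only by $g/(1-g(v))$, which inflates the weight by a factor $1/(1-g(v))$, and mixing links with different thresholds gives no single cover. Moreover, the bound $g(V_1\cup V_2\cup V_3)\le n-g(V_4)$ is useless by itself, because the lemma needs $g(V(L))+\max_i g(V_i)\le n$. With only total weight below $n$ one can take $g\equiv 1$ on $n-1$ vertices of $V_1$; then the link of every zero-weight vertex of $V_4$ is almost complete.

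The missing idea is to relabel so that $g(V_1)=\max_i g(V_i)$ and use complementary slackness with a maximum fractional matching $f$. If $g>0$ on all of $V_1$, every vertex of $V_1$ is saturated by $f$, and since every edge meets $V_1$ exactly once, $f(E(H))=n$. So some $v_0\in V_1$ has $g(v_0)=0$. Then $g$ restricted to $V_2\cup V_3\cup V_4$ covers $L=N_H(v_0)$, and $g(V(L))+\max_{i\ge 2}g(V_i)\le g(V(H))<n$ precisely because $V_1$ is the heaviest class. You must also recover the structure on $V_1$ separately: the paper gets $g(V_1)\le(\frac14+6\xi)n$ from the total weight, and $|W_1|\ge(\frac14-7\xi)n$ from the degree of a vertex of $X_4$.

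Your proof of the $3$-partite lemma is also missing its two substantive steps. Nothing you describe moves the values to nearby levels ``without decreasing degrees or increasing $g(V)$'', and no such operation exists in general. Lowering $g(v)$ can only delete edges of the stable graph, raising it costs weight, and tightening $g$ keeps the edge set but does not force the values into a fixed finite set such as $\{0,\frac13,\frac12,\frac23,1\}$.

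The paper's half-duplication works differently. It splits a class into two halves of nearly equal $g$-weight and copies one half (neighbourhoods and weights) onto the other. This preserves only the \emph{total} edge count on average ($e(H_1)+e(H_2)=2e(H)$), not vertex degrees, which is why the $3$-partite problem must be about $e(L)$ rather than a minimum degree. Each step costs one unit of slack, whence the classes $\mathcal S_{n,m}$. After $O(\log n)$ steps $g$ is two-valued on each class. The one step per class at which the poorer copy must be kept is paid for by $f(n,m+1)\le f(n,m)+8n^2$.

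What remains is an optimisation over continuous parameters (a proportion $a_i$, a low value $x_i$ and a gap $d_i$ per class), and solving it is exactly the part you leave open. The paper handles type sets with $|\mathcal T|\le 6$ by a SCIP computation. For $|\mathcal T|=7$ it shows $d_i\ge q=1-x_1-x_2-x_3$, hence $a_1+a_2+a_3+\max_i a_i\le 1+10\varepsilon$, and then applies Lemma~\ref{appen1} to $e(H)\le n^3-|V_1^-||V_2^-||V_3^-|$. Without these steps, your proposal names the correct extremal profile but does not prove it is the unique near-optimum.
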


\begin{lemma}\label{lem:frac-3-mat0}
Let $0<\gamma\ll \xi\ll1$, and let $n$ be sufficiently large.
Let $H$ be a $3$-partite $3$-graph with partition classes
$V_1,V_2,V_3$, each of size $n$, and let
$g:V(H)\to[0,1]$ be a fractional vertex cover of $H$. Suppose that
\begin{enumerate}[label=(\roman*)]
    \item $g(V(H))+\max \{g(V_i):i\in[3]\}\le n$; and
    %$2g(V_1)+g(V_2)+g(V_3)\le n$, $g(V_1)\ge \max\{g(V_2),g(V_3)\}$, and
    \item  $e(H)\ge \left(\frac{37}{64}-\gamma\right)n^3 .$
\end{enumerate}
Then there exist disjoint subsets $W_i,X_i\subseteq V_i$ for $i\in[3]$
with 
$|W_i|=(1/4\pm \xi)n$ and $|X_i|=(3/4\pm \xi)n$ such that $g(w)\ge 1-\xi$ for all $w\in W_i$ and $g(x)\le \xi$ for all $x\in X_i$.
\end{lemma}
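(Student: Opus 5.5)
The plan is to treat the lemma as a stability statement for an extremal optimization problem. The extremal configuration is $g\equiv 1$ on sets $W_i$ of size $n/4$ and $g\equiv 0$ elsewhere. For it, $g(V(H))+\max_i g(V_i)=3n/4+n/4=n$, and the triples with $g(e)\ge 1$ are exactly those meeting $W_1\cup W_2\cup W_3$, giving $n^3-(3n/4)^3=\frac{37}{64}n^3$. So I would prove two things: that $\frac{37}{64}n^3$ is the asymptotic maximum of $e(H)$ under (i), and that near-maximizers are close to this configuration.

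\textbf{Step 1 (reduction to stable $H$).} Since $g$ is a fractional vertex cover, every edge $e$ of $H$ satisfies $g(e)\ge 1$. Hence $e(H)\le e(H_g)$, where $H_g$ is the stable $3$-partite $3$-graph with respect to $g$. Hypothesis (ii) therefore holds for $H_g$, and the conclusion only concerns $g$. So we may assume $H$ is stable with respect to $g$. Then $e(H)/n^3=\Pr[g(x_1)+g(x_2)+g(x_3)\ge 1]$, where the $x_i$ are independent and $x_i$ is uniform on $V_i$. Write $\mu_i$ for the distribution of $g(x_i)$ on $[0,1]$ and $m_i$ for its mean. Condition (i) becomes $m_1+m_2+m_3+\max_i m_i\le 1$. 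The problem is now to maximize
\[
\Phi(\mu_1,\mu_2,\mu_3)=\Pr[Y_1+Y_2+Y_3\ge 1],\qquad Y_i\sim\mu_i \text{ independent},
\]
under this mean constraint.

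\textbf{Step 2 (finite-dimensional reduction).} Fix $\mu_2,\mu_3$. Then $\Phi$ is linear in $\mu_1$: it equals $\int \phi(y)\,d\mu_1(y)$, where $\phi(y)=\Pr[Y_2+Y_3\ge 1-y]$ is nondecreasing. The constraint on $\mu_1$ involves only $m_1$. Maximizing a linear functional over probability measures on $[0,1]$ with prescribed mean gives an optimum at an extreme point, i.e.\ a measure with at most two atoms. Moreover, since $\phi$ is a nondecreasing step-type function, moving mass down to the left endpoint of each level set does not decrease $\Phi$. I would apply this to each class in turn. This presumably plays the role of the half-duplication procedure from Section~2. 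The result is a finite-dimensional problem in the variables (atom positions $0\le a_i<b_i\le 1$, weight $p_i$ at $b_i$), with $\Phi$ piecewise polynomial. Using monotonicity I would then show that only a few atom types matter: value $0$, value $1$, and intermediate values such as $1/2$ or $1/3$ that make pairs or triples of atoms sum to exactly $1$. Optimizing over the finitely many combinatorial types then gives the bound $\Phi\le 37/64$, with equality only at $a_i=0$, $b_i=1$, $p_i=1/4$. I expect this case analysis to be the main obstacle. The competitors to rule out include configurations where one class carries value $1/2$ on many vertices (pairs of halves cover), or $1/3$-valued atoms in all classes. For each I would write $\Phi$ explicitly and use the constraint with $\max_i m_i$ to show it is strictly below $37/64$.

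\textbf{Step 3 (stability).} To convert near-optimality into structure, I would use compactness. The space of triples of probability measures on $[0,1]$ is compact in the weak topology. $\Phi$ is upper semicontinuous there, because $\{y_1+y_2+y_3\ge 1\}$ is a closed set. The constraint is closed. Since the maximizer is unique, for every $\xi$ there is $\gamma$ such that $\Phi\ge 37/64-\gamma$ forces each $\mu_i$ to be within $\xi^2$ (in Lévy distance) of $\frac14\delta_1+\frac34\delta_0$. Uniqueness also has to be checked for the original, non-reduced measures: the Step~2 moves must be strict improvements unless the measure already has the extremal form, or one verifies uniqueness directly through the equality cases.

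Finally, closeness of $\mu_i$ to $\frac14\delta_1+\frac34\delta_0$ means all but $\xi n$ vertices of $V_i$ have $g$-value in $[0,\xi]\cup[1-\xi,1]$, with about $n/4$ near $1$ and about $3n/4$ near $0$. Let $W_i=\{v\in V_i: g(v)\ge 1-\xi\}$ and $X_i=\{v\in V_i: g(v)\le\xi\}$. These are disjoint, and $|W_i|=(1/4\pm\xi)n$, $|X_i|=(3/4\pm\xi)n$ after adjusting constants, as required.
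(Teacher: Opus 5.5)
\textbf{Verdict: the framework is sound and differs from the paper's, but the central bound $37/64$ (for two-atom triples, with a unique maximizer) is asserted rather than proved, so the proposal has a genuine gap.}

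\emph{What works.} Your Step 1, the extreme-point reduction in Step 2, and the compactness argument in Step 3 are sound. Together they form a genuinely different route from the paper's.
\begin{itemize}
\item The paper discretizes. Its half-duplication preserves the constraint only up to additive slack. That is why it needs the classes $\mathcal S_{n,m}$, the bound $f(n,m+1)\le f(n,m)+8n^2$, even $n$, and $O(\log n)$ iterations. It then recovers the structure of $g$ on the class untouched by the last algorithm through an explicit counting argument.
\item Your replacement of $\mu_i$ by a two-atom measure with the same mean preserves the constraint exactly, so there is no slack to control.
\item Compactness then gives non-quantitative stability. This suffices, since only $\gamma\ll\xi$ is required.
\end{itemize}
Your worry about uniqueness for non-reduced measures is easily settled. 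Let $\nu=\frac34\delta_0+\frac14\delta_1$. If $(\nu,\nu,\mu)$ is a maximizer, then $\Phi=\frac{7}{16}+\frac{9}{16}\mu(\{1\})$. The constraint forces $\int y\,d\mu\le\frac14$, so $\mu(\{1\})=\frac14$ forces $\mu=\nu$. Run your three replacement steps backwards: each intermediate triple is still a maximizer with two coordinates equal to $\nu$. Hence every maximizer equals $(\nu,\nu,\nu)$, once this is known for two-atom triples.

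\emph{The gap.} The whole quantitative content of the lemma is the claim that a two-atom triple with $\sum_i m_i+\max_i m_i\le 1$ has $\Phi\le 37/64$, with equality only at $(\nu,\nu,\nu)$. You assert this rather than prove it. The mechanism you suggest, that optimal atoms lie only at $0,\frac12,\frac13,1$, is not justified: for a fixed pattern, the term $\max_i m_i$ creates breakpoints $m_i=m_j$ that depend on the weights $p_i$.

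The workable organization is by the set $\mathcal T\subseteq\{-,+\}^3$ of boxes that are edges, which is an up-set.
\begin{itemize}
\item $\mathcal T=\{-,+\}^3$ violates the constraint.
\item The decisive case is $\mathcal T=\{-,+\}^3\setminus\{(-,-,-)\}$. Put $q=1-a_1-a_2-a_3>0$. The edges $(+,-,-)$ and its permutations give $b_i-a_i\ge q$. Hence $\sum_i m_i\ge 1-q+q\sum_i p_i$ and $\max_i m_i\ge q\max_i p_i$. The constraint then yields $p_1+p_2+p_3+\max_i p_i\le 1$.
\item One then needs the inequality $\prod_i(1-t_i)\ge 27/64$ whenever $t_i\in[0,1]$ and $\sum_i t_i+\max_i t_i\le 1$, with equality only at $t_i=\frac14$ (Lemma~\ref{appen1} in the paper). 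This gives $\Phi=1-\prod_i(1-p_i)\le 37/64$.
\item Tracing equality through the chain forces $p_i=\frac14$, then $b_i-a_i=q$ and $a_i=0$, hence $b_i=1$.
\end{itemize}
All smaller up-sets must then be shown to stay strictly below $37/64$. For example, if $(-,-,-)$ and $(-,-,+)$ are the non-edges, the same normalization gives $p_1+p_2+\max(p_1,p_2)\le 1$ and $\Phi\le 5/9$. The paper, however, settles all patterns with $|\mathcal T|\le 6$ only by a SCIP computation. Until this finite case analysis is actually carried out, your argument does not establish the bound $37/64$, on which the compactness and stability steps entirely rest.
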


We first prove Theorem~\ref{thm:frac-mat} using Lemma~\ref{lem:frac-3-mat0}, whose proof is postponed to the end of the section.
\subsection{Proof of Theorem~\ref{thm:frac-mat}}
Choose $\xi$ such that $\gamma\ll \xi\ll \varepsilon$.
Suppose, for a contradiction, that $H$ has no perfect fractional matching. Let $f$ be a maximum fractional matching of $H$, and let $g$ be a minimum
fractional vertex cover. By linear programming duality, we have $f(E(H))=g(V(H))<n$. Relabel the vertex classes so that $g(V_1)=\max\{g(V_i):i\in[4]\}$.
%Put $A_i=g(V_i)$ for $i\in[4]$ and relabel the vertex classes so that
%$A_1\ge A_2\ge A_3\ge A_4$.

We first find a vertex of weight zero in $V_1$. If $g(v)>0$ for every
$v\in V_1$, then complementary slackness gives
$\sum_{e\ni v}f(e)=1$ for every $v\in V_1$.
Since every edge contains exactly one vertex from $V_1$, this implies $f(E(H))=\sum_{v\in V_1}\sum_{e\ni v}f(e)=n$,
a contradiction. Hence there exists $v_0\in V_1$ with $g(v_0)=0$.

Let $ L=N_H(v_0)$
be the link $3$-graph of $v_0$ on $V_2\cup V_3\cup V_4$; that is,
$$E(L)=\Bigl\{\{x_2,x_3,x_4\}\in V_2\times V_3\times V_4:
\{v_0,x_2,x_3,x_4\}\in E(H)\Bigr\}.$$
Then the restriction of $g$ to $V_2\cup V_3\cup V_4$ is a fractional vertex
cover of $L$. Moreover, we have
$g(V(L))+\max\{g(V_i):2\leq i\leq 4\}\le g(V(H))<n$.
Also, we have
$e(L)=d_H(v_0)\ge \delta_1(H)
        \ge \left(37/64-\gamma\right)n^3$.
Now we can apply Lemma~\ref{lem:frac-3-mat0} to $L$ with vertex classes
$V_2,V_3,V_4$, we conclude that for every $i=2,3,4$, there
exist disjoint sets $W_i,X_i\subseteq V_i$ with 
$|W_i|=(1/4\pm \xi)n$ and $|X_i|=(3/4\pm \xi)n$ such that $g(w)\ge 1-\xi$ for all $w\in W_i$ and $g(x)\le \xi$ for all $x\in X_i$.

We next recover the corresponding structure inside $V_1$. Since the vertices in
$W_2,W_3,W_4$ have large $g$-weight, we have
\[
        g(V_2)+g(V_3)+g(V_4)
        \ge
        (1-\xi)(|W_2|+|W_3|+|W_4|)
        \ge
        \left(\frac34-6\xi\right)n .
\]
As $g(V(H))<n$, it follows that
$g(V_1)\le (1/4+6\xi)n$.
Define
$ W_1:=\{u\in V_1:g(u)\ge 1-3\xi\}$. Then
$|W_1|\le (1/4+7\xi)n$.

We now prove the corresponding lower bound on $|W_1|$. Fix a vertex
$x_4\in X_4$. For every choice of $u\in V_1\setminus W_1,\ x_2\in X_2$ and $x_3\in X_3$,
we have
$g(u)+g(x_2)+g(x_3)+g(x_4)<1$.
Thus $\{u,x_2,x_3,x_4\}\notin E(H)$, as $g$ is a fractional vertex cover of
$H$.  Consequently, $d_H(x_4)
        \le
        n^3-(n-|W_1|)|X_2||X_3|$.
On the other hand,
$d_H(x_4)\ge
        \left(37/64-\gamma\right)n^3 $.
Recall that $|X_i|=(3/4\pm \xi)n$ for $i=2,3$. Then
\[
        \left(\frac{37}{64}-\gamma\right)n^3
        \le
        n^3-(n-|W_1|)
        \left(\frac34-\xi\right)^2n^2 .
\]
Since $\gamma\ll \xi$, this implies $|W_1|\ge \left(1/4-7\xi\right)n$.
Thus we have $|W_1|=\left(1/4\pm 7\xi\right)n$.

We will now show that $H$ is close to $H_4^0(n;n)$, contradicting the
assumption. For each $i\in[4]$, choose a set
$W_i^0\subseteq V_i$  such that $|W_i^0\triangle W_i|\le 8\xi n$ and $W_i^0=\lceil (n-i+1)/4\rceil$.
Let
$U_i:=V_i\setminus W_i^0$ for each $i\in[4]$.
Let $H^0$  be the copy of $H_4^0(n;n)$ determined by
$W_1^0,W_2^0,W_3^0,W_4^0$, i.e.
\[
        E(H^0)
        =
        \Bigl\{
        \{v_1,v_2,v_3,v_4\}\in V_1\times V_2\times V_3\times V_4:\
        1\le
        \left|\{i:v_i\in W_i^0\}\right|
        \le 3
        \Bigr\}.
\]
It is enough to show $|E(H^0)\setminus E(H)|\leq 300\xi n^4$.

For this purpose, let
$Z_1=V_1\setminus W_1$ and, for $i=2,3,4$, put $ Z_i=X_i$.
Then for each $i\in[4]$, we have $|U_i\triangle Z_i|\le 20\xi n$.
Moreover, $Z_1\cup Z_2\cup Z_3\cup Z_4$ is independent in  $H$.
Indeed, if $z_i\in Z_i$ for all $i\in[4]$, then $ g(z_1)+g(z_2)+g(z_3)+g(z_4)<(1-3\xi)+3\xi=1$,
so $\{z_1,z_2,z_3,z_4\}\notin E(H)$. Also recall that $\delta_1(H)\ge (37/64-\gamma)n^3$. Hence,
for every $i\in[4]$ and every $w\in U_i\cap Z_i$, we have
\begin{align*}
  |N_{H^0}(w)\setminus N_H(w)|&\le d_{H^0}(w)-(d_H (w)-|N_H(w)\cap U_j\times U_k\times U_l|)\\
  &\le (\frac{37}{64}+\gamma)n^3-\left(\frac{37}{64}-\gamma\right)n^3+60\xi n^3\\
  &\le 61\xi n^3,
\end{align*}
where $\{j,k,l\}=[4]\setminus\{i\}$.

Therefore,
\begin{align*}
  |E(H^0)\setminus E(H)| &\le\sum_{i=1}^4\sum_{u\in U_i}|N_{H^0}(u)\setminus N_H(u)|\\
        &=\sum_{i=1}^4\sum_{u\in U_i\cap Z_i}|N_{H^0}(u)\setminus N_H(u)|+\sum_{i=1}^4\sum_{u\in U_i\setminus Z_i}|N_{H^0}(u)\setminus N_H(u)|\\
        &\le \sum_{i=1}^4\sum_{u\in U_i\cap Z_i}61\xi n^3+\sum_{i=1}^4\sum_{u\in U_i\setminus Z_i}n^3\\
        &\le  300\xi n^4.
\end{align*}
Since $\xi\ll \varepsilon$, $H$ is $\varepsilon$-close to
$H_4^0(n;n)$, a contradiction. This completes the proof of Theorem~\ref{thm:frac-mat}.
\qed

\subsection{A half-duplication operation}
We introduce an auxiliary class of weighted $3$-partite $3$-graphs.  % The parameter $m$ measures the amount of slack allowed.
\begin{definition}
    Let  $m\ge 0$ be an integer, and let $V_1,V_2,V_3$ be vertex sets with $|V_1|=|V_2|=|V_3|=n$. A pair $(H,g)$ is said to have property $\mathcal{S}_{n,m}$ if $H$ is a 3-partite 3-graph with partition classes $V_1,V_2,V_3$, and $g:V(H)\to[0,1]$ is a fractional vertex cover of $H$ such that 
    \begin{equation*}
    g(V(H))+\max\{g(V_i):i\in[3]\}\le n+2m.
        %2g(V_1)+g(V_2)+g(V_3)\le n+2m\qquad\text{and}\qquad g(V_1)+m\ge \max\{g(V_2),g(V_3)\}.
\end{equation*}
Define $f(n,m):=\max\{e(H):(H,g)\text{ has property }\mathcal{S}_{n,m}\text{ for some }g\}$.
\end{definition}

 \begin{proposition}\label{m+1_m}
For all sufficiently large $n$ and every fixed integer $0\le m\le n/100$, we have
\[
f(n,m+1)\le f(n,m)+8n^2.
\]
\end{proposition}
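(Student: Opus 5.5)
We take a pair $(H,g)$ with property $\mathcal{S}_{n,m+1}$ and $e(H)=f(n,m+1)$, and build a pair $(H',g')$ with property $\mathcal{S}_{n,m}$ by deleting all edges at a bounded number of vertices (at most $8$). Since each vertex lies in at most $n^2$ edges of a $3$-partite $3$-graph with classes of size $n$, this gives $f(n,m)\ge e(H')\ge e(H)-8n^2$. If $g(V(H))+\max_i g(V_i)\le n+2m$ already holds, we take $(H',g')=(H,g)$. So assume
\[
g(V(H))+\max_i g(V_i)> n+2m .
\]
Write $G=g(V(H))$ and $M=\max_i g(V_i)$. Since $M\le G$, this gives $2G>n+2m$. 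Since $M\ge G/3$, the property $\mathcal{S}_{n,m+1}$ gives $\tfrac43 G\le G+M\le n+2m+2$.

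\textbf{The operation.} In each class $V_i$, let $T_i$ be the two vertices of largest $g$-weight. Define $g'$ by setting $g'(v)=0$ for $v\in T:=T_1\cup T_2\cup T_3$ and $g'=g$ elsewhere. Let $H'$ be obtained from $H$ by deleting all edges meeting $T$. Then $g'$ is still a fractional vertex cover of $H'$, because every surviving edge avoids $T$ and so keeps its $g$-weight. Also $e(H')\ge e(H)-6n^2$, since $|T|=6$.

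\textbf{Key estimate.} The two heaviest vertices of $V_i$ carry at least twice the average weight of $V_i$, so
\[
g'(V_i)\le g(V_i)\Bigl(1-\frac2n\Bigr)\quad\text{for each } i.
\]
Summing gives $g'(V(H'))\le G-\tfrac{2G}{n}$. Taking the maximum gives $\max_i g'(V_i)\le M-\tfrac{2M}{n}\le M-\tfrac{2G}{3n}$. Hence the left-hand side of the defining inequality drops by at least
\[
\frac{2G}{n}+\frac{2G}{3n}=\frac{8G}{3n}.
\]
We need this drop to be at least $2$, i.e.\ $G\ge \tfrac34 n$. This bound holds because $G+M>n+2m$ and $M\le G$ only give $G>n/2$. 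So the main obstacle is the range $n/2<G<3n/4$.

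\textbf{The range $n/2<G<3n/4$.} Here $M>n+2m-G>n/4$, so $M$ is large relative to $G$. We use the remaining two deletions on the maximum class: also zero the next two heaviest vertices of a class $V_j$ attaining $M$, for a total of $8$ vertices and cost at most $8n^2$. We redo the estimate using $M$ rather than $G/3$:
\begin{itemize}
\item the sum drops by at least $\tfrac{2G}{n}+\tfrac{2M}{n}$ (two extra vertices in $V_j$, each of weight at least the average $M/n$, up to $O(1/n)$ corrections);
\item the maximum drops by at least $\tfrac{2M}{n}$, because every class loses at least a $2/n$ fraction of its weight.
\end{itemize}
The total drop is therefore at least $\tfrac{2G+4M}{n}\ge \tfrac{2(G+M)}{n}+\tfrac{2M}{n}>2+\tfrac{4m}{n}+\tfrac12>2$. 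The $O(1/n)$ slack is absorbed because $n$ is large and $m\le n/100$.

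In either range, $(H',g')$ satisfies $g'(V(H'))+\max_i g'(V_i)\le n+2m$. The careful point is the bookkeeping that the new maximum class may differ from the old one. This is handled by the uniform multiplicative bound $g'(V_i)\le g(V_i)(1-2/n)$, which applies to every class simultaneously, rather than by tracking which class attains the maximum.
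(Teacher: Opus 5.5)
Your proof is correct, but it takes a different route from the paper. The paper notes that $G+M\le 4M$, so once $G+M\ge n$ some class $V_j$ has $g(V_j)\ge n/4$. The eight heaviest vertices of that one class then carry weight at least $2$. Zeroing them (deleting at most $8n^2$ edges) lowers $G$ by at least $2$, and $M$ cannot increase, so no case analysis is needed. You instead spread the deletions over all three classes and use the multiplicative bound $g'(V_i)\le(1-2/n)g(V_i)$, which lowers both the total and the maximum.

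Your case split is unnecessary. If you keep the bound $M'\le M-2M/n$ rather than weakening it through $M\ge G/3$, the six-vertex step already gives
\[
G'+M'\le\Bigl(1-\frac2n\Bigr)(G+M)\le\Bigl(1-\frac2n\Bigr)(n+2m+2)\le n+2m .
\]
So the range $n/2<G<3n/4$ never needs separate treatment, and you get the slightly better constant $6n^2$.

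Two small fixes:
\begin{itemize}
\item The sentence ``This bound holds because\dots'' should read ``need not hold''.
\item In the second case, drop the $O(1/n)$ hedging. The four heaviest vertices of $V_j$ together carry at least $4M/n$, which gives the sum drop $(2G+2M)/n$ exactly.
\end{itemize}

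In short, your method yields a marginally sharper bound, while the paper's one-class argument is shorter.
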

\begin{proof}
Let $(H,g)$ have property $\mathcal S_{n,m+1}$ such that $e(H)=f(n,m+1)$.
We may assume that $g(V(H))+\max\{g(V_i)\ |\ i\in[3]\}\ge n$, otherwise $(H,g)$ already has property $\mathcal S_{n,m}$ and thus $e(H)\le f(n,m)$. It follows that \(g(V_j)\ge n/4\) for some \(j\in[3]\);  we assume that \(j=1\). Thus there exists a set $S\subseteq V_1$ such that $|S|=8$ and $g(S)\ge 2$.
Now let $H'$ be obtained from $H$ by deleting all edges incident to $S$, and
define
\[
        g'(v)=
        \begin{cases}
        0, & v\in S,\\
        g(v), & v\notin S.
        \end{cases}
\]
Then $g'$ is a fractional vertex cover of $H'$. 
By the choice of $S$, we have
\[
g'(V(H'))+\max_{i\in[3]}g'(V_i)\le g(V(H))+\max_{i\in[3]}g(V_i)-2\le n+2m,
\]
 which shows that \((H',g')\) satisfies property \(\mathcal S_{n,m}\).
Consequently, \(e(H')\le f(n,m)\).  
Observe that $\sum_{x\in S}d_{H}(x)\leq |S|n^2\leq 8n^2$.  
Therefore,
\[
e(H)\le e(H')+\sum_{x\in S}d_H(x)\le f(n,m)+8n^2.
\]
This yields the desired inequality.
\end{proof}

Let \((H,g)\) be a pair consisting of a 3-partite 3-graph \(H\) with partition classes \((V_1,V_2,V_3)\) and a fractional vertex cover \(g\) of \(H\). Suppose that $n$ is even.
For each $i\in[3]$, we define a \textit{half-duplication} operation on \(V_i\) as follows.

Order $V_i=\{u_1,\dots,u_n\}$ so that $g(u_1)\ge g(u_2)\ge\cdots\ge g(u_n)$. 
For each $t\in\{1,\dots,n/2+1\}$, define $S_t:=\{u_t,u_{t+1},\dots,u_{t+ n/2-1}\}$.
Note that $g(S_{t+1})\le g(S_t)$ and $|g(S_{t+1})-g(S_{t})|\le 1$. Moreover,
$g(S_{1})\ge g(V_i)/2\ge g(S_{n/2+1})$. Thus there exists an index $t$ such that $|2g(S_{t})-g(V_i)|\le 1$.
Fix the smallest such index $t$, and let $V_i^1=S_t$ and $V_i^2=V_i\setminus S_t$. Let 
$\varphi:V_i^1\to V_i^2$ be  a bijection and  let $\psi:=\varphi^{-1}$.

We define two new $3$-partite $3$-graphs $H_1,H_2$ both on the same partition $(V_1,V_2,V_3)$, via neighborhood duplication as follows:
\[
N_{H_1}(x)=
\begin{cases}
N_H(x) & x\in V_i^1,\\
N_H(\psi(x)) & x\in V_i^2,
\end{cases}
\qquad
N_{H_2}(x)=
\begin{cases}
N_H(x) & x\in V_i^2,\\
N_H(\varphi(x)) & x\in V_i^1.
\end{cases}
\]
Correspondingly, we define fractional vertex covers $g_1$
 and $g_2$ by mirroring the weights on $V_i$ in the same way, while keeping all other weights unchanged:
\[
g_1(x)=
\begin{cases}
g(x) & x\in V(H)\setminus V_i^2,\\
g(\psi(x)) & x\in V_i^2,
\end{cases}
\qquad
g_2(x)=
\begin{cases}
g(x) & x\in V(H)\setminus V_i^1,\\
g(\varphi(x)) & x\in V_i^1.
\end{cases}
\]
Thus, we obtain two pairs $(H_1,g_1)$ and $(H_2,g_2)$. Let $\Gamma_{i,1}(H,g)$ be the pair with more edges; if $e(H_1)=e(H_2)$, set $\Gamma_{i,1}(H,g)=(H_1,g_1)$. Let $\Gamma_{i,2}(H,g)$ be the other pair. We write $e(\Gamma_{i,j}(H,g))$ for the number of edges in the
hypergraph of the pair $\Gamma_{i,j}(H,g)$.

\begin{proposition}\label{oper_prop}
    Let $m\ge 0$, and let $(H,g)$ have property $\mathcal{S}_{n,m}$. Then for every $i\in[3]$ and $j\in[2]$, $\Gamma_{i,j}(H,g)$ has property $\mathcal{S}_{n,m+1}$, and  $e(\Gamma_{i,1}(H,g))+e(\Gamma_{i,2}(H,g))=2e(H)$.
\end{proposition}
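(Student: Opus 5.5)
We have to check two things for each half-duplicated pair: that $g_j$ is a fractional vertex cover of $H_j$ and that the weight inequality holds with $m+1$ in place of $m$. We also need the edge-count identity.

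\emph{Cover property.} Take $(H_1,g_1)$; the case $(H_2,g_2)$ is symmetric with $\varphi$ and $\psi$ swapped. Every edge of $H_1$ contains exactly one vertex $x\in V_i$. If $x\in V_i^1$, the edge is an edge of $H$ and all its weights are unchanged, so $g_1(e)=g(e)\ge 1$. If $x\in V_i^2$, then $e=\{x\}\cup T$ with $T\in N_H(\psi(x))$. Hence $\{\psi(x)\}\cup T\in E(H)$, and
\[
g_1(e)=g(\psi(x))+g(T)\ge 1,
\]
since $g_1(x)=g(\psi(x))$ and $g_1=g$ off $V_i^2$. For this to make sense, $H_1$ must be well defined as a 3-partite 3-graph. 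Its edge set is $\bigcup_{x\in V_i}\{x\}\times N_{H_1}(x)$, where each link lies in the product of the other two classes; edges not meeting $V_i$ do not exist.

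\emph{Edge count.} Since each edge meets $V_i$ exactly once, $e(H)=\sum_{x\in V_i}d_H(x)$. Similarly
\[
e(H_1)=2\sum_{x\in V_i^1}d_H(x),\qquad e(H_2)=2\sum_{x\in V_i^2}d_H(x).
\]
Therefore $e(H_1)+e(H_2)=2e(H)$. As $\{\Gamma_{i,1},\Gamma_{i,2}\}=\{(H_1,g_1),(H_2,g_2)\}$, the identity follows.

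\emph{Weight condition.} The classes $V_\ell$ with $\ell\ne i$ keep their weights, while
\[
g_1(V_i)=2g(V_i^1)\le g(V_i)+1,\qquad g_2(V_i)=2g(V_i^2)=2g(V_i)-2g(V_i^1)\le g(V_i)+1,
\]
both using $|2g(S_t)-g(V_i)|\le 1$. So $g_j(V(H_j))\le g(V(H))+1$. The maximum over classes also increases by at most $1$, because only one class changed and it grew by at most $1$. Adding these,
\[
g_j(V(H_j))+\max_\ell g_j(V_\ell)\le n+2m+2=n+2(m+1).
\]
This gives property $\mathcal S_{n,m+1}$.

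The only delicate point is the existence of the index $t$, which the paper has already justified, and checking that the bound on the maximum increases by at most $1$ rather than $2$; everything else is bookkeeping.
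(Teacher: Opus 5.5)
Your proof is correct and follows the paper's argument. The cover property holds by construction, the weight on $V_i$ changes by at most $1$ so that both the total weight and the maximum class weight grow by at most $1$ (giving $n+2(m+1)$), and the edge identity follows because every edge meets $V_i$ exactly once. You also write out the details the paper leaves implicit, such as $g_1(V_i)=2g(S_t)$ and $g_2(V_i)=2g(V_i)-2g(S_t)$, and the explicit check that $g_1(e)\ge 1$ for edges through $V_i^2$.
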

\begin{proof}
By construction, $g_1$ and $g_2$ are fractional
vertex covers of $H_1$ and $H_2$, respectively. Moreover, the half-duplication operation changes
the total weight on $V_i$ by at most $1$, and leaves the total weight of the other two
parts unchanged. Therefore both $\Gamma_{i,1}(H,g)$ and $\Gamma_{i,2}(H,g)$
have property $\mathcal S_{n,m+1}$.

Finally, if the operation is applied on $V_i$, then
$
        e(H_1)=2\sum_{x\in V_i^1}d_H(x),
$
and
$
        e(H_2)=2\sum_{x\in V_i^2}d_H(x).
$
Since every edge of $H$ contains exactly one vertex from $V_i$, we have
$
        e(H_1)+e(H_2)=2\sum_{x\in V_i}d_H(x)=2e(H).
$
This proves the proposition.
\end{proof}

For a fractional vertex cover \(h\) and \(i\in[3]\), write
$\mathrm{val}_i(h):=\bigl|\{h(x):x\in V_i\}\bigr|$,
namely, the number of distinct values of \(h\) on \(V_i\).
Now we describe an algorithm that outputs a vertex cover of only a few values. 

\begin{algorithm}[h]
\caption{half-duplication iterations}
\label{alg:Ai}
\SetKwInOut{Input}{Input}
\SetKwInOut{Output}{Output}
\Input{A pair \((H,g)\) and an index \(i\in[3]\).}
\Output{A pair \(\mathcal A_i(H,g)\).}

Let \(r:=\lceil \log_2 n\rceil\) and set \((J,h):=(H,g)\)\;

\For{\(s=1,\ldots,3r-1\)}
{
    \If{\(\mathrm{val}_i(h)\le 2\)}
    {
        \KwRet{\((J,h)\)}\;
    }

    Let $(J_1,h_1):=\Gamma_{i,1}(J,h)$ and $(J_2,h_2):=\Gamma_{i,2}(J,h)$.

    \If{\(\mathrm{val}_i(h_1)<\mathrm{val}_i(h)\)}
    {
        Replace \((J,h)\) by \((J_1,h_1)\)\;
    }
    \Else
    {
        Replace \((J,h)\) by \((J_2,h_2)\)\;
        \KwRet{\((J,h)\)}\;
    }
}

\KwRet{\((J,h)\)}\;
\end{algorithm}

\begin{proposition}\label{prop:two-values-one-side}
Fix \(i\in[3]\), let \(r:=\lceil\log_2 n\rceil\), and let
$(H',g'):=\mathcal A_i(H,g)$.
Then \(g'\) takes at most two distinct values on \(V_i\).
Moreover, \((H',g')\) is obtained from \((H,g)\) after at most \(3r-1\) half--duplication operations on \(V_i\).
\end{proposition}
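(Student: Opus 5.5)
The plan is to track what a single half-duplication does to the multiset of $h$-values on $V_i$, and then run two potentials in parallel: the number of distinct values and the length of an original block of $V_i$ that still carries all the values.

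First, the basic structure. Write $v:=\mathrm{val}_i(h)$ and order $V_i=\{u_1,\dots,u_n\}$ by non-increasing $h$, so that $V_i^1=S_t$ is a block of $n/2$ consecutive vertices in this order. By construction the values of $h_1$ on $V_i$ are exactly the values of $h$ on $S_t$, each occurring twice. Likewise the values of $h_2$ are exactly the values of $h$ on $V_i\setminus S_t$, which consists of a prefix $\{u_1,\dots,u_{t-1}\}$ and a suffix $\{u_{t+n/2},\dots,u_n\}$. In particular $\mathrm{val}_i(h_1)\le \mathrm{val}_i(h)$ always holds. Also, $|V_i|=n$ stays even, so every step of $\mathcal A_i$ is well defined.

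Next, the three exits of the algorithm.
\begin{enumerate}[label=(\alph*)]
\item The exit triggered by $\mathrm{val}_i(h)\le 2$ is trivial.
\item For the exit in the \emph{else} branch, we have $\mathrm{val}_i(h_1)\ge\mathrm{val}_i(h)$, so by the basic structure $\mathrm{val}_i(h_1)=\mathrm{val}_i(h)$, and $S_t$ contains every value of $h$ on $V_i$. In particular the maximum $h(u_1)$ occurs in $S_t$. Since the prefix values lie between $h(u_t)$ and $h(u_1)$, and $h(u_t)$ is the largest value in $S_t$, we get $h(u_t)=h(u_1)$, so every prefix value equals the maximum. Symmetrically, every suffix value equals the minimum. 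Hence $h_2$ takes at most two values on $V_i$.
\item The remaining case is that the loop runs through all $3r-1$ iterations, always taking $(J_1,h_1)$. This needs a real argument, because strict decrease of $\mathrm{val}_i$ alone only lowers the count by $3r-1$, while it may start near $n$.
\end{enumerate}

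For case (c), I will argue that the values of $h$ on $V_i$ after $s$ consecutive $\Gamma_{i,1}$-steps are all attained on a set of at most $L_s$ vertices forming a consecutive block in the original ordering of $V_i$ under $g$, where
\[
L_0=n,\qquad L_s\le L_{s-1}/2+1 .
\]
The reason is that after a step the sorted list on $V_i$ is the block $S_t$ with each entry doubled. A window of $n/2$ consecutive positions in this doubled list therefore meets at most $n/4+1$ of the underlying vertices of $S_t$, consecutive in order. Iterating, and tracking the original preimages through the bijections $\psi$, gives the recursion. Solving it yields $L_s\le n/2^s+2$, so after $r=\lceil\log_2 n\rceil$ steps at most $3$ values remain. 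Since each continued $\Gamma_{i,1}$-step strictly decreases $\mathrm{val}_i$, at most one further step brings it to $\le 2$, which is caught by the check at the start of the next iteration. This happens well within $3r-1$ iterations (recall $n$ is large). So the loop never terminates with $\mathrm{val}_i\ge 3$.

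The bound of at most $3r-1$ half-duplications is immediate, since each iteration performs exactly one operation. The main obstacle I expect is making the block-length recursion precise. The ordering used to choose $S_t$ is recomputed at each step, and ties allow arbitrary orders, so I must check that an $n/2$-window of the doubled list really corresponds to a consecutive run of about $L/2+1$ original vertices. Handling the $\pm1$ rounding at both ends is exactly where the additive constant in $L_s$ comes from.
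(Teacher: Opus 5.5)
Your argument rests on reading $(J_1,h_1)$ as the copy of $S_t$. In the paper, however, $\Gamma_{i,1}(J,h)$ is the pair with \emph{more edges}. It can therefore be $(H_2,g_2)$ in the construction's notation, i.e.\ the copy of $V_i\setminus S_t=\{u_1,\dots,u_{t-1}\}\cup\{u_{t+n/2},\dots,u_n\}$. This choice cannot be changed, because Proposition~\ref{prop:edge-stability-one-side} needs the kept branch never to lose edges. So your ``basic structure'' claim is false in general. In case (b) the situation you omit is easy to handle. If copying the complement keeps every value, then each value-block meeting $S_t$ also meets the complement. By contiguity it then contains one of the boundary pairs $\{u_{t-1},u_t\}$ or $\{u_{t+n/2-1},u_{t+n/2}\}$. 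Hence $S_t$ meets at most two blocks, and $h_2$ takes at most two values.

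Case (c) is a genuine gap. Suppose a step copies the complement, with $2\le t\le n/2$ and all values distinct. The new value set contains $g(u_1)$ and $g(u_n)$, so the only consecutive block carrying all the values is $V_i$ itself. Already $L_1\le n/2+1$ fails. Later windows can also straddle the junction between the prefix part and the suffix part, which fragments the tracked set into several runs, each with its own boundary loss. So neither your recursion nor the conclusion ``at most $3$ values after $r$ steps'' is justified.

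The missing idea is a count that does not care which half is copied, as in the paper:
\begin{itemize}
\item In either case at most two current value-blocks are split, namely those containing the two boundary pairs. Every other surviving value lies wholly in the copied half, so its multiplicity doubles.
\item A value never split during the first $r$ operations therefore has multiplicity at least $2^r\ge n$. There is at most one such value, so $\mathrm{val}_i\le 2r+1$ after $r$ steps.
\item The remaining $2r-1$ iterations each strictly decrease $\mathrm{val}_i$, which brings it to at most $2$.
\end{itemize}
This is exactly why the budget is $3r-1$ rather than $r+O(1)$. Your consecutive-block recursion would be sound, and even sharper, if the interval $S_t$ were always the copied half, but the algorithm does not guarantee that.
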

\begin{proof}
We follow Algorithm~\ref{alg:Ai}.  Clearly the algorithm performs at most
\(3r-1\) half--duplication operations on \(V_i\).
If the
algorithm terminates with \(\mathrm{val}_i(h)\le 2\), then the conclusion is immediate.  Suppose
therefore that it terminates at a step where
\(\mathrm{val}_i(h_1)=\mathrm{val}_i(h)\), so that the algorithm outputs
\((J_2,h_2)\).  In the half--duplication step, one of the two sets
\(S_t\) and \(V_i\setminus S_t\) is copied to the other half.  If copying this
set does not decrease the number of values, then this set must meet every
current value-block, here a value-block means a maximal set of vertices of \(V_i\) on which \(h\) takes the same value.  Hence the other set meets at most two value-blocks.
Thus \(h_2\) takes at most two
values on \(V_i\).

 Thus we may assume that $(J_2,h_2)$ is never selected by the algorithm. Let \((H^{(s)},g^{(s)})\) be the pair obtained after \(s\) such operations.  After the first \(r\) operations, we claim that $\mathrm{val}_i(g^{(r)})\le 2r+1$.
Indeed, during each of these operations, the copied set is either an interval \(S_t\) or its complement in the ordering of \(V_i\) by the current cover values. Hence at most two current value-blocks are cut partially. Call a final value exceptional if it arises from a value-block that was cut partially in one of the first \(r\) operations. There are at most \(2r\) exceptional final values. Every non-exceptional final value is copied as a whole at each of the first \(r\) operations, and therefore has multiplicity at least \(2^r\). Since \(2^r\ge n=|V_i|\), there is at most one non-exceptional final value. This proves the claim.

After the first \(r\) operations, the algorithm perform at most
\(2r-1\) further operations.  Starting from at most
\(2r+1\) values, each
such operation strictly decreases \(\mathrm{val}_i\).
Therefore the final output \((H',g')=\mathcal A_i(H,g)\) satisfies $ \mathrm{val}_i(g')\le 2$.
This proves the proposition.
\end{proof}

\begin{proposition}\label{prop:edge-stability-one-side}
Let $n\in\mathbb{Z}$ be sufficiently large and $m\in\mathbb{Z}$ such that $m\le \sqrt{n}$. Fix \(i\in[3]\), let \(r=\lceil\log_2 n\rceil\), and let
$(H',g')=\mathcal A_i(H,g)$.
Assume that \((H,g)\) has property \(\mathcal{S}_{n,m}\) and
$e(H)\ge f(n,0)-\Delta$.
Then
$e(H')\ge f(n,0)-\Delta^\ast$, where
$\Delta^\ast:=2\Delta + 8(m+3r-1)n^2$.
\end{proposition}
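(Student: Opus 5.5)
We will track the edge count along the run of Algorithm~\ref{alg:Ai}, combining Proposition~\ref{oper_prop} with Proposition~\ref{m+1_m}. Let $(H^{(0)},g^{(0)})=(H,g)$ and let $(H^{(s)},g^{(s)})$ be the pair after $s$ half-duplication operations. Set $L=3r-1$, so the algorithm performs $s^\ast\le L$ operations. By Proposition~\ref{oper_prop} and induction, $(H^{(s)},g^{(s)})$ has property $\mathcal S_{n,m+s}$. Since $m+s\le \sqrt n+3\lceil\log_2 n\rceil\le n/100$ for large $n$, iterating Proposition~\ref{m+1_m} gives
\[
f(n,m+s)\le f(n,0)+8(m+s)n^2 .
\]
Thus every intermediate pair has at most $f(n,0)+8(m+s)n^2$ edges.

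Each operation replaces $(J,h)$ either by $\Gamma_{i,1}(J,h)$ or by $\Gamma_{i,2}(J,h)$. If it is $\Gamma_{i,1}$, the pair with more edges, then by Proposition~\ref{oper_prop} we have $e(\Gamma_{i,1}(J,h))\ge e(J)$, so the edge count does not drop. If it is $\Gamma_{i,2}$, then the identity $e(\Gamma_{i,1})+e(\Gamma_{i,2})=2e(J)$ together with the upper bound on $e(\Gamma_{i,1})$ gives
\[
e(\Gamma_{i,2}(J,h))=2e(J)-e(\Gamma_{i,1}(J,h))\ge 2e(J)-f(n,0)-8(m+s)n^2 .
\]
By the structure of Algorithm~\ref{alg:Ai}, the $\Gamma_{i,2}$ branch is taken at most once, and then the algorithm stops immediately. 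Before that step all operations were $\Gamma_{i,1}$, so $e(J)\ge e(H)\ge f(n,0)-\Delta$.

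If the last step is a $\Gamma_{i,1}$ step (or no operation occurs), we get $e(H')\ge f(n,0)-\Delta\ge f(n,0)-\Delta^\ast$. Otherwise the final step is $\Gamma_{i,2}$ at some step $s\le L$, with $J$ satisfying $\mathcal S_{n,m+s-1}$. Then $\Gamma_{i,1}(J,h)$ satisfies $\mathcal S_{n,m+s}$, so $e(\Gamma_{i,1})\le f(n,0)+8(m+s)n^2$. Hence
\[
e(H')\ge 2(f(n,0)-\Delta)-f(n,0)-8(m+L)n^2=f(n,0)-2\Delta-8(m+3r-1)n^2,
\]
which is exactly the bound $e(H')\ge f(n,0)-\Delta^\ast$.

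The only delicate points are bookkeeping. First, $(H,g)$ starts at slack $m$, not $0$, so the slack index must be carried correctly: $m+s$ after $s$ steps. Second, Proposition~\ref{m+1_m} must apply at every index up to $m+L$, which needs $m+L\le n/100$. Third, we must check from Algorithm~\ref{alg:Ai} that the loss-incurring $\Gamma_{i,2}$ branch occurs at most once, which is why $\Delta$ is only doubled rather than amplified exponentially. Proposition~\ref{m+1_m} is stated for a fixed $m$; we use it for $m$ up to $\sqrt n+3r$, relying on its proof being uniform, and this is the step where care is most needed.
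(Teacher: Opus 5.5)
Your proposal is correct and follows essentially the same argument as the paper. The edge count does not drop along $\Gamma_{i,1}$ steps, and the single terminal $\Gamma_{i,2}$ step is handled by the identity $e(\Gamma_{i,1}(J,h))+e(\Gamma_{i,2}(J,h))=2e(J)$ together with $e(\Gamma_{i,1}(J,h))\le f(n,m+s)\le f(n,0)+8(m+s)n^2$, obtained by iterating Proposition~\ref{m+1_m}. Your uniformity worry is harmless: that proof deletes eight vertices of total weight at least $2$ and never uses the size of $m$, exactly as the paper implicitly assumes when it iterates it.
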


\begin{proof}
Consider one iteration applied to a current pair $(J,h)$.  If the algorithm keeps
$\Gamma_{i,1}(J,h)$, then Proposition~\ref{oper_prop} gives
\[
        e(\Gamma_{i,1}(J,h))
        \ge
        \frac{
        e(\Gamma_{i,1}(J,h))+e(\Gamma_{i,2}(J,h))
        }2
        =
        e(J).
\]
Hence the number of edges does not decrease along every step in which the
procedure keeps the first branch.
If the algorithm never switches to \(\Gamma_{i,2}\), then
$e(H')\ge e(H)\ge f(n,0)-\Delta$,
and the conclusion follows.

Thus we may assume that the algorithm stops by switching to $\Gamma_{i,2}$ at
its final step.  Let $(J,h)$ be the pair immediately before this step, and
write
$(H',g')=\Gamma_{i,2}(J,h)$.
All previous steps kept the first branch, so
$e(J)\ge e(H)\ge f(n,0)-\Delta$.
Suppose that this final step is the $(s+1)$-st half-duplication operation in
the whole procedure.  Then \(s+1\le 3r-1\), and by Proposition~\ref{oper_prop}, both outputs of this final step have property \(\mathcal{S}_{n,m+s+1}\).
Therefore
$e\bigl(\Gamma_{i,1}(J,h)\bigr)\le f(n,m+s+1)$.
Using the edge identity from Proposition~\ref{oper_prop}, we obtain
\[
e(H')
=e\bigl(\Gamma_{i,2}(J,h)\bigr)
=2e(J)-e\bigl(\Gamma_{i,1}(J,h)\bigr)
\ge 2e(J)-f(n,m+s+1).
\]
Since $m+s+1\le m+3r-1\le n/100$, iterating Proposition~\ref{m+1_m} 
gives
\[
f(n,m+s+1)\le f(n,0) + 8(m+s+1)n^2 \le f(n,0) + 8(m+3r-1)n^2.
\]
Combining the last two inequalities with \(e(J)\ge f(n,0)-\Delta\), we get
\[
e(H')\ge 2(f(n,0)-\Delta)-\bigl(f(n,0)+8(m+3r-1)n^2\bigr)
= f(n,0)-2\Delta-8(m+3r-1)n^2.
\]
This is exactly the desired bound.
\end{proof}

\subsection{The reduced two-valued case}
We need the following lemma before proving Lemma~\ref{lem:frac-3-mat0}.
\begin{lemma}\label{lem:two-value}
There exist constants \(\varepsilon_0 > 0\) and \(C > 0\) such that for all \(0 < \varepsilon \le \varepsilon_0\) and all sufficiently large integers \(n\), the following holds.  
Let $(H,g)$ be a pair with property $\mathcal S_{n,\varepsilon n}$, and suppose
that $H$ is stable with respect to $g$ and $\mathrm{val}_i(g)\le 2$ for every $i\in[3]$. Then
\[
        e(H)\le \left(\frac{37}{64}+C\varepsilon\right)n^3.
\]
Moreover, if $e(H)\ge \left(37/64-\varepsilon\right)n^3$, then, for every $i\in[3]$, there are sets $V_i^+,V_i^-\subseteq V_i$ with $|V_i^+|=\left(1/4\pm C\varepsilon\right)n$, $|V_i^-|=\left(3/4\pm C\varepsilon\right)n$
such that $ g(v)\ge 1-C\varepsilon$ for $v\in V_i^+$ and $g(v)\le C\varepsilon$ for $v\in V_i^-$.
\end{lemma}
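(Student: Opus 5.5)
Since $g$ takes at most two values on each $V_i$, I will write them as $p_i\ge q_i$, attained on sets of sizes $\alpha_i n$ and $(1-\alpha_i)n$. Because $H$ is stable with respect to $g$, $e(H)$ depends only on these parameters:
\[
e(H)=n^3\sum_{S} \prod_{i\in[3]} \beta_i(S),
\]
where the sum runs over choices $S$ of one level per class whose three weights sum to at least $1$, and $\beta_i(S)\in\{\alpha_i,1-\alpha_i\}$ is the corresponding proportion. The constraint of property $\mathcal S_{n,\varepsilon n}$ becomes
\[
\sum_i(\alpha_ip_i+(1-\alpha_i)q_i)+\max_i(\alpha_ip_i+(1-\alpha_i)q_i)\le 1+2\varepsilon .
\]
This is a finite-dimensional optimization over $(p_i,q_i,\alpha_i)\in[0,1]^3$, $i\in[3]$, and the plan is to solve it exactly at $\varepsilon=0$ and then argue stability.

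The first step is to reduce the weights to a small set of types. For a fixed edge pattern, I can lower every level to the minimal value still compatible with the set of triples whose weight sums reach $1$. This shows that only finitely many combinatorial types matter: the set of covered level triples is an up-set in $\{+,-\}^3$. For each up-set $\mathcal U$, I would minimise the cost $\sum_i w_i+\max_i w_i$, where $w_i=\alpha_ip_i+(1-\alpha_i)q_i$, subject to $p+p+q\ge1$-type linear constraints. This is a linear program in $(p,q)$ for fixed $\alpha$. I would then maximise $\sum_{\mathcal U}\prod\beta$ subject to the cost being at most $1$. I would enumerate the up-sets up to symmetry: all triples; those containing at least one $+$; those containing at least two $+$; a single $+$ in a specified class; and so on.

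The expected optimum is the up-set of triples containing at least one $+$, with $p_i=1$, $q_i=0$ and $\alpha_i=1/4$. This gives cost $3/4+1/4=1$ and edge count $1-(3/4)^3=37/64$. For each other up-set I would show the maximum is at most $37/64-c$ for an absolute $c>0$, for instance the "at least two $+$" pattern with $p_i=1/2$ or mixed patterns. I would also show that within the optimal up-set the optimum is unique. For the stability part I then need a quantitative version. The feasible region is compact and the objective is continuous, so a compactness argument gives $o(1)$ stability. To obtain the linear dependence $C\varepsilon$, I would instead do a local analysis near the optimum, checking that the objective decreases at least linearly in the distance from $(1,0,1/4)$ on the constraint surface. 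This is plausible because the optimum sits at a vertex-like point where the active constraints ($p_i=1$, $q_i=0$ from the box and the cost constraint) determine the point with nonzero multipliers.

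The main obstacle is the case analysis over up-sets combined with the nonlinear $\max$ term. I would handle the $\max$ by fixing which class attains it, say class $1$, giving cost $2w_1+w_2+w_3$. Then I would use Lagrangian/AM--GM type bounds. For example, in the "at least one $+$" pattern, the covering constraint forces $p_i+q_j+q_k\ge 1$. Minimal cost then pushes $q_i=0$ and $p_i=1$, and maximising $1-\prod(1-\alpha_i)$ subject to $2\alpha_1+\alpha_2+\alpha_3\le1$ gives $\alpha_i=1/4$ via a tangency computation; the value $37/64$ is symmetric, so the $\max$ constraint binds in all classes. Translating back, $V_i^+$ is the $p_i$-level and $V_i^-$ the $q_i$-level. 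The closeness $p_i\ge1-C\varepsilon$, $q_i\le C\varepsilon$ and $\alpha_i=1/4\pm C\varepsilon$ gives the claimed sets. In degenerate cases where a level has tiny size, I can absorb it into the error terms.
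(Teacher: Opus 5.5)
Your plan follows the paper's route. Stability plus two values per class means $e(H)$ is determined by which of the eight level-types are edges; this is an up-set, as you note. The full type set is infeasible, and every type set other than ``all types except $(-,-,-)$'' stays a constant below $37/64$. In the main case one reduces to $\sum_i\alpha_i+\max_i\alpha_i\le 1+O(\varepsilon)$ and a product inequality with linear stability, which is the paper's Lemma~\ref{appen1}. The paper does not treat the non-main type sets by hand: it certifies them with SCIP, obtaining $e(H)<(5/9+1/100)n^3$. If you do them by hand, note two things. The margin is only $37/64-5/9=13/576$. And the extremal points lie on the boundary: the ``at least two $+$'' pattern is not maximised at your symmetric choice $p_i=1/2$ (value $1/2$). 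It tends to $5/9$ along $q\equiv0$, $p_1=a\to0$, $p_2=p_3=1-a$, $\alpha_1\to1$, $\alpha_2=\alpha_3=1/3$.

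The step in the main case that fails as written is `maximising $1-\prod(1-\alpha_i)$ subject to $2\alpha_1+\alpha_2+\alpha_3\le 1$ gives $\alpha_i=1/4$ via a tangency computation'. Under that single constraint the supremum is $1$ (take $\alpha=(0,1,0)$). So you must keep all three constraints $\alpha_j+\alpha_1+\alpha_2+\alpha_3\le 1$ for $j\in[3]$, i.e.\ really impose that class $1$ attains the maximum. Even then the optimum is not a tangency point. The function $\sum_i\log(1-\alpha_i)$ is strictly concave, and its Lagrange point on the face $2\alpha_1+\alpha_2+\alpha_3=1$ is $(1/2,0,0)$. That point \emph{maximises} $\prod(1-\alpha_i)$, i.e.\ gives the worst density $1/2$. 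The correct argument is global. By concavity, the minimum of $\prod(1-\alpha_i)$ over the polytope $P=\{\alpha\ge0:\alpha_j+\sum_i\alpha_i\le1\ \forall j\}$ is attained at a vertex. Up to symmetry the vertices are $(0,0,0)$, $(1/2,0,0)$, $(1/3,1/3,0)$, $(1/4,1/4,1/4)$, with products $1,1/2,4/9,27/64$. Linear stability follows because at $(1/4,1/4,1/4)$ the vector $-\nabla\sum_i\log(1-\alpha_i)=\frac43(1,1,1)$ is $\frac13$ times the sum of the three linearly independent active normals $(2,1,1),(1,2,1),(1,1,2)$. Hence the function grows linearly away from this vertex inside $P$, and the $\varepsilon$-slack is absorbed by rescaling $\alpha$ by $1+O(\varepsilon)$. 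Lemma~\ref{appen1} proves the same facts through the monotonicity of $\log(1-x)/x$.

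Finally, two of your heuristics need to become explicit inequalities: `minimal cost pushes $q_i=0$, $p_i=1$' and `the objective decreases linearly in the distance from $(1,0,1/4)$'. The edge count does not depend on $(p,q)$ at all, so the control on $(p,q)$ must come from the cost constraint. Put $Q=q_1+q_2+q_3<1$.
\begin{itemize}
\item Your covering inequality $p_i+q_j+q_k\ge1$ gives $w_i\ge\alpha_i(1-Q)+q_i$. Hence the cost is at least $(1-Q)(\sum_i\alpha_i+\max_i\alpha_i)+Q$.
\item Also, the cost is at least $\frac43\sum_iw_i\ge\frac43Q$, which yields $1-Q\ge\frac14-O(\varepsilon)$.
\item Together these give $\sum_i\alpha_i+\max_i\alpha_i\le1+O(\varepsilon)$. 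This is exactly the paper's $d_i\ge q$, $\mu_i\ge a_i$ step.
\item Once $\alpha_i=\frac14+O(\varepsilon)$, write $p_i=1-Q+q_i+s_i$ with $s_i\ge0$. The same expansion gives cost at least $1+\frac13(Q+s_1+s_2+s_3)-O(\varepsilon)$, so $q_i\le C\varepsilon$ and $p_i\ge1-C\varepsilon$.
\end{itemize}
With these repairs your outline coincides with the paper's proof, with a hand case analysis in place of the SCIP certificate.
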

First, we prove an auxiliary lemma.
\begin{lemma}\label{appen1}
There exists a constant \(K > 0\) such that the following holds.  
Let \(t_1, t_2, t_3 \in [0, 1]\). Denote \(s = t_1 + t_2 + t_3\) and \(m = \max\{t_1, t_2, t_3\}\). If \(s + m \le 1 + \eta\), then  
\[
(1 - t_1)(1 - t_2)(1 - t_3) \ge \frac{27}{64} - K\eta.
\]  
Moreover, if in addition \((1 - t_1)(1 - t_2)(1 - t_3) \le \frac{27}{64} + \eta\), then for each \(i \in [3]\) we have \(t_i = \frac14 \pm K\eta\).
%There exists a constant $K>0$ such that the following holds. Let $t_1,t_2,t_3\in [0,1]$. Let $s=t_1+t_2+t_3$, and $m=\max\{t_1,t_2,t_3\}$. If $s+m\le 1+\eta$, then \[    (1-t_1)(1-t_2)(1-t_3)\ge \frac{27}{64}-K\eta. \] Moreover, if also $(1-t_1)(1-t_2)(1-t_3)\le 27/64+\eta$, then $t_i=1/4\pm K\eta$ or every $i\in[3]$.
\end{lemma}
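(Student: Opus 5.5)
Throughout we write $P(t_1,t_2,t_3):=(1-t_1)(1-t_2)(1-t_3)$ and order the variables so that $t_1\ge t_2\ge t_3$; then $m=t_1$ and the hypothesis reads $2t_1+t_2+t_3\le 1+\eta$. We may assume $\eta\le \eta_0$ for a small absolute constant $\eta_0$, since for larger $\eta$ both conclusions hold trivially once $K\ge 1/\eta_0$ (the product is nonnegative, and $t_i\in[0,1]$ gives $|t_i-1/4|\le 1\le K\eta$). The plan is to first solve the exact problem $\eta=0$, namely to minimize $P$ over the polytope $\{t\in[0,1]^3: 2t_1+t_2+t_3\le 1,\ t_1\ge t_2\ge t_3\}$, and then pass to $\eta>0$ by a perturbation argument.

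\textbf{Exact minimum.} Since $P$ is decreasing in each coordinate, the minimum lies on the face $2t_1+t_2+t_3=1$. Fix $t_1$ and set $u=t_2+t_3=1-2t_1$. For fixed $t_1$, the factor $(1-t_2)(1-t_3)$ equals $1-u+t_2t_3$, which is minimized by making $t_2t_3$ as small as possible under the constraints $t_3\le t_2\le t_1$. There are two regimes:
\begin{itemize}
\item if $u\le t_1$, we take $t_3=0$, giving $P=(1-t_1)(1-u)$;
\item otherwise we take $t_2=t_1$ and $t_3=1-3t_1$, giving $P=(1-t_1)^2\cdot 3t_1$.
\end{itemize}
\emph{First regime.} Here $u=1-2t_1\le t_1$ forces $t_1\ge 1/3$, and then $P=2t_1(1-t_1)$, whose minimum over $t_1\in[1/3,1/2]$ is $4/9$ at $t_1=1/3$.
\emph{Second regime.} Here $t_1\in[1/4,1/3]$ and $h(t_1):=3t_1(1-t_1)^2$. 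Its derivative is $3(1-t_1)(1-3t_1)\ge 0$, so $h$ is increasing and attains its minimum $27/64$ exactly at $t_1=1/4$, where $t_2=t_3=1/4$.
Since $27/64<4/9$, the unique minimizer over the whole polytope is $(1/4,1/4,1/4)$. The order assumption $t_1\ge t_2\ge t_3$ then gives the same conclusion for any labeling of the coordinates.

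\textbf{Perturbation.} For $2t_1+t_2+t_3\le 1+\eta$, scale $t':=t/(1+\eta)$. Then $t'$ satisfies the $\eta=0$ constraint, so $P(t')\ge 27/64$. Moreover $|t_i-t_i'|\le\eta$ and $P$ is $3$-Lipschitz in the $\ell_\infty$ norm on $[0,1]^3$, which yields $P(t)\ge 27/64-3\eta$.

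\textbf{Stability (main obstacle).} We need a quantitative, linear-in-$\eta$ stability statement near the minimizer. The key observation is that the minimizer is a vertex of the relevant region: the active constraints are $2t_1+t_2+t_3=1$, $t_1=t_2$ and $t_2=t_3$, so $P$ grows linearly away from it rather than quadratically. Concretely:
\begin{enumerate}
\item Along the face $2t_1+t_2+t_3=1$, the bound $P\ge\min\{h(t_1),\,2t_1(1-t_1)\}$ already carries linear growth. Writing $\delta:=t_1-1/4$, we get $h(t_1)\ge 27/64+c\,\delta$ for some $c>0$, because $h'>0$ on $[1/4,1/3)$ ($h'(1/4)=27/16$). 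The first regime contributes at least $4/9$, so it only needs $4/9>27/64+\eta$, which holds since $\eta\le\eta_0$ is small.
\item Off the face, the slack $\sigma:=1-(2t_1+t_2+t_3)$ satisfies $\sigma\ge-\eta$, and positive slack only increases $P$ by a linear amount.
\end{enumerate}

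Making this uniform is cleanest via the scaled point $t'$. Combining $P(t')\le P(t)+3\eta\le 27/64+4\eta$ with $P(t')\ge h(t_1')\ge 27/64+c(t_1'-1/4)$ gives $t_1'\le 1/4+O(\eta)$; the same chain, using that $P\ge 27/64$ throughout the constrained region, also gives $t_1'\ge 1/4-O(\eta)$. We then have to rule out $t_2'$ and $t_3'$ being unbalanced, i.e.\ to show that $(1-t_2')(1-t_3')$ exceeds its minimum $1-u+t_2't_3'$ only slightly. This uses the crude bound $P\ge 27/64$ together with the observation that $t_2't_3'$ is far from its minimum unless $t_3'\approx 1-3t_1'$; each of these deviations enters linearly. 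Finally, since $t'$ differs from $t$ by at most $\eta$ in each coordinate, every $t_i$ equals $1/4\pm K\eta$. I expect the bookkeeping in this last case analysis, keeping every estimate linear in $\eta$, to be the only delicate part of the argument.
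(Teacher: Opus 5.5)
Your proof of the inequality itself is correct, but it reaches the exact minimum by a different route from the paper. You slice by $t_1$ and minimize $t_2t_3$ explicitly, obtaining the sharp profile $\min\{3t_1(1-t_1)^2,\,2t_1(1-t_1)\}$. The paper instead uses that $\log(1-x)/x$ is non-increasing to get $P\ge (1-m)^{s/m}\ge (1-m)^{\min\{3,(1-m)/m\}}$, a one-variable bound with no regime split. Your scaling by $1+\eta$ does the same job as the paper's scaling by $s+m$. (Small slip: $h'(1/4)=9/16$, not $27/16$; only positivity matters.)

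The stability half, however, has genuine holes. First, the bound $t_1'\ge 1/4-O(\eta)$ does not follow from ``the same chain, using $P\ge 27/64$''. The chain through $h$ is only available for $t_1'\ge 1/4$, and $P\ge 27/64$ says nothing about $t_1'$. What you need is $P(t')\ge (1-t_1')^3$, valid because $t_i'\le t_1'$, together with $3(1-x)^2\ge 27/16$ for $x\le 1/4$.

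Second, your mechanism for balancing $t_2',t_3'$ is not what actually works. On the face, write $t_2'=t_1'-x$. The admissible range is then $0\le x\le (4t_1'-1)/2=O(\eta)$. The excess of $t_2't_3'$ over its slice minimum is $x(4t_1'-1)-x^2$, whose slope in $x$ degenerates as $t_1'\to 1/4$, so there is no uniform linear growth in $x$. On the face, balancing comes for free from the ordering. The real issue is the slack $\sigma'=1-(2t_1'+t_2'+t_3')$, which you never bound. One can show $P(t')\ge h(t_1')+(1-t_1')^2\sigma'$ when $t_2'+t_3'\ge t_1'$, but this has to be proved, not asserted.

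The paper's step avoids faces and slack entirely. With $r_i=t_1'-t_i'\ge 0$ (so $r_1=0$),
\[
P(t')=\prod_{i=1}^{3}\bigl((1-t_1')+r_i\bigr)\ge (1-t_1')^3+(1-t_1')^2(r_2+r_3).
\]
Once $|t_1'-1/4|=O(\eta)$, the hypothesis $P(t')\le 27/64+4\eta$ forces $r_2+r_3=O(\eta)$, so every $t_i'=1/4\pm O(\eta)$.

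Your observation that the minimizer is a vertex with strict first-order growth is correct. It could alternatively be turned into a proof: linear growth near the vertex, plus uniqueness of the minimizer and compactness, gives $P-27/64\ge c\,\|t'-(1/4,1/4,1/4)\|$ on the whole ordered region. As written, though, neither route is carried out.
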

\begin{proof}
Throughout the proof, \(K_1,\ldots,K_7\) denote positive absolute constants,
chosen sufficiently large so that the estimates below hold.
We first prove the lemma under the stronger assumption $s+m\le 1$. Then $0\le t_i\le m$ for every $i\in[3]$, and  $m\le 1/2$. Since $\log(1-x)/x$ is non-increasing on $(0,1)$, we have $\log(1-t_i)\ge t_i\log(1-m)/m$ for every $i\in[3]$. Hence $(1-t_1)(1-t_2)(1-t_3)\ge (1-m)^{s/m}$.

If $m\le 1/4$, note that $s/m\le 3$. Therefore $(1-t_1)(1-t_2)(1-t_3)\ge (1-m)^3\ge 27/64$. If $m\ge 1/4$, set $m'=(1-m)/m$. Since $s+m\le1$, we have $m\le 1/2$ and $s/m\le m'$. Therefore $1\le m'\le 3$. Thus $(1-t_1)(1-t_2)(1-t_3)\ge (m'/(m'+1))^{m'}\ge 27/64$. This proves the exact bound when $s+m\le 1$.

We next prove the stability statement in this case. Assume $s+m\le 1$ and $(1-t_1)(1-t_2)(1-t_3)\le 27/64+\delta$. First suppose $m\le 1/4$. Then
\[
    (1-m)^3\le (1-t_1)(1-t_2)(1-t_3)\le 27/64+\delta.
\]
It follows that $1/4-m\le K_1\delta$. Next suppose $m\ge 1/4$. With $m'=(1-m)/m$, we have
\[
    \left(\frac{m'}{m'+1}\right)^{m'}\le (1-t_1)(1-t_2)(1-t_3)\le \frac{27}{64}+\delta.
\]
The function $x\mapsto (x/(x+1))^x$ has derivative bounded away from $0$ on $[1,3]$. Hence $3-m'\le K_2\delta$, and consequently $m-1/4\le K_3\delta$. Thus, in all cases, $|m-1/4|\le K_4\delta$.

For each $i\in[3]$, write $r_i=m-t_i$. Then $r_i\ge0$ and $1-t_i=1-m+r_i$. We have
\[
    (1-t_1)(1-t_2)(1-t_3)\ge (1-m)^3+(1-m)^2(r_1+r_2+r_3).
\]
Using $(1-t_1)(1-t_2)(1-t_3)\le 27/64+\delta$ and $|m-1/4|\le K_4\delta$, we obtain $r_1+r_2+r_3\le K_5\delta$. Therefore $|t_i-1/4|\le K_6\delta$ for every $i\in[3]$.

Finally, assume only that $s+m\le 1+\eta$. If $s+m\le 1$, set $t_i'=t_i$. If $s+m>1$, set $t_i'=t_i/(s+m)$. Then $t_1'+t_2'+t_3'+\max_i t_i'\le 1$, and $|t_i-t_i'|\le \eta$ for every $i\in[3]$. Moreover,
\[
    \left|\prod_{i=1}^3(1-t_i)-\prod_{i=1}^3(1-t_i')\right|
    \le |t_1-t_1'|+|t_2-t_2'|+|t_3-t_3'|
    \le 3\eta.
\]
Applying the exact bound for case $s+m\le 1$ to $t_1',t_2',t_3'$ gives $\prod_{i=1}^3(1-t_i)\ge 27/64-3\eta$. If also $\prod_{i=1}^3(1-t_i)\le 27/64+\eta$, then $\prod_{i=1}^3(1-t_i')\le 27/64+4\eta$. The stability statement already proved above gives $t_i'=1/4\pm K_7\eta$ for every $i\in[3]$. Since $|t_i-t_i'|\le \eta$, the desired conclusion follows for $K=K_7+1$.
\end{proof}
Now, we are ready to prove Lemma~\ref{lem:two-value}.
\begin{proof}[Proof of Lemma~\ref{lem:two-value}]
     We may assume throughout that $\mathrm{val}_i(g)=2$ for every $i\in[3]$, and that $H$ is stable with respect to $g$.
Let $V_1,V_2,V_3$ be the partition classes of $H$.
Write $V_i=V_i^-\cup V_i^+$, where $g$ is constant on both $V_i^-$ and $V_i^+$, and the value of $g$ on $V_i^-$ is smaller than the value on $V_i^+$.
The edge-type set of the pair $(H,g)$ is
\[
\mathcal T(H,g)
=
\Bigl\{(\sigma_1,\sigma_2,\sigma_3)\in\{-,+\}^3:
V_1^{\sigma_1}\times V_2^{\sigma_2}\times V_3^{\sigma_3}
\subseteq E(H)\Bigr\}.
\]
Equivalently, $\mathcal T(H,g)$ records which of the eight possible value-types occur as edges of $H$.

If $|\mathcal T(H,g)|=8$, then $H$ is the complete 3-partite $3$-graph, which is impossible in the present setting. If $|\mathcal T(H,g)|\le 6$, then, for each possible type set, we solve the corresponding optimization problem using 
{\textbf{SCIP}}, and obtain $e(H)<(5/9+1/100)n^3$ (for $\varepsilon\le 0.0005$); see the detailed verification at
\url{https://github.com/feihong0810/scip-verification-finite-types}. Hence it remains to consider the case $|\mathcal T(H,g)|=7$.

 Write $|V_i^+|=a_i n$, $g|_{V_i^-}=x_i$, $g|_{V_i^+}=x_i+d_i$, and $A_i=g(V_i)/n=x_i+a_i d_i$, where $d_i>0$ for every $i\in[3]$. Since $(H,g)$ has property $\mathcal S_{n,\varepsilon n}$, we have 
 \begin{align}\label{SCIP-eq1}
A_1+A_2+A_3+\max\{A_i:i\in[3]\} \le 1+2\varepsilon.
 \end{align}

Let $q=1-x_1-x_2-x_3$. Since $A_i\ge x_i$, we have
\[
    1+2\varepsilon
    \ge A_1+A_2+A_3+\max\{A_i:i\in[3]\}
    \ge \frac43(x_1+x_2+x_3).
\]
Thus $q\ge 1/4-2\varepsilon$. By taking $\varepsilon_0$ sufficiently small, we may assume $q\ge 1/5$.

Since $|\mathcal T(H,g)|=7$, we have $x_1+x_2+x_3+d_i=1-q+d_i\ge 1$ for every $i\in[3]$, and so $d_i\ge q$ for every $i\in[3]$.
For every $i\in[3]$, let $\mu_i=a_i d_i/q$. By (\ref{SCIP-eq1}), we have
\[
    \sum_{i\in[3]}\frac{x_i}{q}+\sum_{i\in[3]}\mu_i
    +\max\{\left(\frac{x_i}{q}+\mu_i\right):i\in[3]\}
    \le \frac{1+2\varepsilon}{q}.
\]
Also, since $1/q=1+\sum_{i\in[3]} x_i/q$ and $q\ge 1/5$, we obtain 
\[
\mu_1+\mu_2+\mu_3+\max\{(\frac{x_i}{q}+\mu_i):i\in[3]\}\le 1+10\varepsilon. 
\]
 Since $d_i\ge q$, we have $a_i\le \mu_i$ for every $i\in[3]$. Therefore $a_1+a_2+a_3+\max\{a_i:i\in[3]\} \le 1+10\varepsilon$. Applying Lemma~\ref{appen1} to $a_1,a_2,a_3$, we get $(1-a_1)(1-a_2)(1-a_3)\ge 27/64-L_1\varepsilon$ for some absolute constant $L_1$.

Since  no edge of $H$ lies in $V_1^-\times V_2^-\times V_3^-$, we have
\[
    \frac{e(H)}{n^3}
    \le 1-(1-a_1)(1-a_2)(1-a_3)
    \le \frac{37}{64}+L_1\varepsilon.
\]
This proves the upper bound.

It remains to prove the stability statement. Assume that $e(H)\ge (37/64-\varepsilon)n^3$. Then the preceding estimate gives $(1-a_1)(1-a_2)(1-a_3)\le 27/64+\varepsilon$. By Lemma~\ref{appen1}, we have $a_i=1/4\pm L_2\varepsilon$ for every $i\in[3]$ and for some absolute constant $L_2$.

Recall that $\mu_1+\mu_2+\mu_3+\max\{(x_i/q+\mu_i):i\in[3]\}\le 1+10\varepsilon$, $a_i\le\mu_i$, and $q\ge1/5$. Since $a_i=1/4\pm L_2\varepsilon$, it follows that $\mu_i=1/4\pm L_3\varepsilon$, and $x_i\le L_4\varepsilon$ for every $i\in[3]$. In particular, $q\ge 1-3 L_4\varepsilon$. Moreover, from $\mu_i=1/4\pm L_3\varepsilon$ and $a_i=1/4\pm L_2\varepsilon$, we have  $x_i+d_i=x_i+\mu_iq/a_i\ge 1-L_5\varepsilon$ for every $i\in[3]$.

Taking $C=\max\{L_2,L_4,L_5\}$, we obtain $|V_i^+|=(1/4\pm C\varepsilon)n$, $|V_i^-|=(3/4\pm C\varepsilon)n$, $g(v)\ge 1-C\varepsilon$ for every $v\in V_i^+$, and $g(v)\le C\varepsilon$ for every $v\in V_i^-$. This completes the proof.
\end{proof}
\subsection{Proof of Lemma~\ref{lem:frac-3-mat0}}
We first use Lemma~\ref{lem:two-value} to obtain the extremal value of
$f(n,0)$.
\begin{lemma}\label{extre_value}
    For every $\gamma>0$ and all sufficiently large $n$, we have $f(n,0)\le (37/64+\gamma)n^3$.
\end{lemma}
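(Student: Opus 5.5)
Fix $\gamma>0$ and let $n$ be large. Take a pair $(H,g)$ with property $\mathcal S_{n,0}$ and $e(H)=f(n,0)$. The plan is to reduce, without losing many edges, to a stable pair in which $g$ takes at most two values on each class, and then apply Lemma~\ref{lem:two-value}.

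First, we may assume $H$ is stable with respect to $g$. Adding every legal triple $e$ with $g(e)\ge 1$ keeps $g$ a fractional vertex cover and does not change $g$, so property $\mathcal S_{n,0}$ is preserved and $e(H)$ does not decrease. Since $e(H)=f(n,0)$ is maximal, the completed pair is still optimal.

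Next, run Algorithm~\ref{alg:Ai} successively for $i=1,2,3$. By Proposition~\ref{prop:two-values-one-side}, each run performs at most $3r-1$ half-duplications on $V_i$, where $r=\lceil\log_2 n\rceil$. A half-duplication on $V_i$ changes only the weights on $V_i$, and it copies values from one half onto the other. So it does not increase the number of distinct values on $V_j$ for $j\ne i$, and the two-valued structure gained on earlier classes is kept. By Proposition~\ref{oper_prop}, the slack parameter grows by one per operation, so the final pair $(H',g')$ has property $\mathcal S_{n,m}$ with $m\le 3(3r-1)=O(\log n)\le\sqrt n$.

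Apply Proposition~\ref{prop:edge-stability-one-side} three times, starting from $\Delta=0$. The successive losses are
\[
\Delta_1=8(3r-1)n^2,\qquad \Delta_2=2\Delta_1+8(6r-2)n^2,\qquad \Delta_3=2\Delta_2+8(9r-3)n^2 ,
\]
each of order $O(n^2\log n)$. Hence $e(H')\ge f(n,0)-O(n^2\log n)$. One technical point is that half-duplication needs $|V_i|$ even. If $n$ is odd, I would first delete one vertex from each class, losing at most $3n^2$ edges, or pass to the nearby even case using monotonicity, which costs only $O(n^2)$. Another point is that a duplicated neighborhood is again stable with respect to the mirrored weights, so $H'$ stays stable with respect to $g'$; if not, we re-complete $H'$ as in the first step, which only adds edges.

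Finally, $(H',g')$ has property $\mathcal S_{n,\varepsilon n}$ for any fixed small $\varepsilon$, since $m=O(\log n)\le\varepsilon n$. It is stable, and $\mathrm{val}_i(g')\le 2$ for every $i$. If some class has only one value, split it artificially into two blocks carrying the same value; the two-value computation then still applies, or we may perturb. Lemma~\ref{lem:two-value} now gives
\[
f(n,0)-O(n^2\log n)\le e(H')\le \left(\tfrac{37}{64}+C\varepsilon\right)n^3 .
\]
Choosing $\varepsilon\le \gamma/(2C)$ and $n$ large yields $f(n,0)\le (37/64+\gamma)n^3$. I expect the main obstacle to be the bookkeeping across the three coordinate passes: checking that stability and the value counts on the other classes survive the operations, and that the accumulated slack and edge loss stay within what Proposition~\ref{prop:edge-stability-one-side} allows.
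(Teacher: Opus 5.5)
Your proposal is correct and follows the paper's proof essentially step for step: you assume stability, run $\mathcal A_1,\mathcal A_2,\mathcal A_3$ to reach a stable pair that is two-valued on each class and has property $\mathcal S_{n,O(\log n)}$, lose only $O(n^2\log n)$ edges by three applications of Proposition~\ref{prop:edge-stability-one-side}, and then invoke Lemma~\ref{lem:two-value}, with odd $n$ handled by deleting one vertex from each class. Your explicit checks are details the paper leaves implicit: that half-duplication preserves stability, and that it leaves the values on the other classes untouched. One further detail neither text spells out: in the odd case the trimmed pair is only guaranteed property $\mathcal S_{n-1,1}$, and Proposition~\ref{m+1_m} absorbs this at an extra cost of $O(n^2)$.
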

\begin{proof}
We first assume that $n$ is even.  The case where $n$ is odd follows by
deleting one vertex from each class, applying the even case to the remaining
balanced graph of class size $n-1$.

Choose $0<\varepsilon\ll\gamma$, and let $(H,g)$ be a pair with property
$\mathcal S_{n,0}$ such that $e(H)=f(n,0)$.  We may assume that $H$ is stable with respect to
$g$.
Let $r=\lceil\log_2 n\rceil$, and apply algorithms
$\mathcal A_1,\mathcal A_2,\mathcal A_3$ successively.  Let
\[
        (H',g'):=\mathcal A_3(\mathcal A_2(\mathcal A_1(H,g))).
\]
By Proposition~\ref{prop:two-values-one-side}, the function $g'$ takes at most
two values on each vertex class.  Moreover, $H'$ is stable with respect to
$g'$, and $(H',g')$ has property
$\mathcal S_{n,9r}$.

By Proposition~\ref{prop:edge-stability-one-side}, applied three times, we have
$
        e(H')\ge f(n,0)-C_1rn^2,
$ for some constant $C_1$.
Since $9r\le \varepsilon n$ for sufficiently large $n$, Lemma~\ref{lem:two-value}
gives
$
        e(H')\le \left(37/64+C_2\varepsilon\right)n^3,
$ for some constant $C_2$.
Therefore
$
        f(n,0)
        \le e(H')+C_1rn^2
        \le \left(37/64+C_2\varepsilon\right)n^3+C_1rn^2.
$
Since  $rn^2\ll n^3$ and $\varepsilon\ll\gamma$, the desired bound follows.
\end{proof}

\begin{proof}[Proof of Lemma~\ref{lem:frac-3-mat0}]
    We may assume that $n$ is even and $H$ is stable with respect to $g$.
Choose a constant $\eta$ such that
$\gamma \ll \eta \ll \xi$
and put $r=\lceil \log_2 n\rceil$.
By Lemma~\ref{extre_value}, applied with parameter $\eta$, we have $f(n,0)\le (37/64+\eta)n^3$.
Since $(H,g)$ has property $\mathcal{S}_{n,0}$ and $ e(H)\ge \left(37/64-\gamma\right)n^3$,
it follows that $e(H)\ge f(n,0)-2\eta n^3$.

Fix $i\in[3]$, and let $j,k$ be the other two indices. Let $(J,h):=\mathcal A_j(\mathcal A_k(H,g))$ and $(P,q):=\mathcal A_i(J,h)$.
 $J$ and $P$ are stable with respect to $h$ and $q$, respectively. By Proposition~\ref{prop:two-values-one-side},
$(P,q)$ and $(J,h)$ has property $\mathcal S_{n,9r}$ and $\mathcal S_{n,6r}$, respectively. By
Proposition~\ref{prop:edge-stability-one-side}, we have
$e(P)\ge \left(37/64-C\eta\right)n^3$ and $e(J)\ge \left(37/64-C'\eta\right)n^3$,
for some absolute constant $C$ and $C'$.

By Proposition~\ref{prop:two-values-one-side}, $q$ takes at most
two values on each vertex class. Since $9r\le \eta n$ for
sufficiently large $n$, we can apply Lemma~\ref{lem:two-value} to
$(P,q)$.  Therefore, for every $s\in[3]$, there is a decomposition $V_s=V_s^-\cup V_s^+$
with
$|V_s^+|=\left(1/4\pm C\eta\right)n$, and $ |V_s^-|=\left(3/4\pm C\eta\right)n$ such that $q(v)\ge 1-C\eta$ for all $v\in V_s^+$ and $q(v)\le C\eta$ for all $v\in V_s^-$.
In particular, for every $s\in[3]$,
\begin{equation*}
  \label{i,j,k}
  q(V_s)\ge \left(1-C\eta\right)\left(\frac14-C\eta\right)n\ge \left(\frac14-2C\eta\right)n.
\end{equation*}

We now convert this structure back to the original cover $g$ on the fixed
side $V_i$.
Since the algorithm $\mathcal A_i$ only modifies the side $V_i$, we have $h(v)=q(v)$ for every $v\in V_j\cup V_k$.
Moreover, since the procedures $\mathcal A_j$ and $\mathcal A_k$ do not modify
the side $V_i$, we have $h(v)=g(v)$ for every $v\in V_i$.
%\begin{equation}\label{i_equal}
%  h(v)=g(v)\qquad \text{for every }v\in V_i.
%\end{equation}
Since $(J,h)$ has property $S_{6r}$,
we have $h(V(H))+\max_{i\in[3]}h(V_i)\le n+12r$.
Hence we have  $h(V_i)\le (1/4+7C\eta)n$.
%\begin{equation}
%  \label{i_sum}
%  h(V_i)\le (\frac{1}{4}+7C\eta)n.
%\end{equation}

Define $ W_i:=\{u\in V_i:h(u)\ge 1-2C\eta\}$.
Since $h(V_i)\le (1/4+7C\eta)n$, we have $|W_i|\le \left(1/4+8C\eta\right)n$.
On the other hand, for every
$u\in V_i\setminus W_i,\ x_j\in V_j^-$ and $x_k\in V_k^-$, we have
\[
        h(u)+h(x_j)+h(x_k)
        <
        (1-2C\eta)+C\eta+C\eta
        <1.
\]
Hence  \(ux_jx_k\notin E(J)\), and
\begin{equation*}
  \label{J_edge_up}
  e(J)\le n^3-|V_j^-||V_k^-||V_i\setminus W_i|\le n^3-(n-|W_i|)(\frac{3}{4}-C\eta)^2n^2.
\end{equation*}
Recall that $e(J)\ge (37/64-C'\eta)n^3$. Hence $|W_i|\ge (1/4-4C\eta-4C'\eta)n$.
Then by $h(V_i)\le (1/4+7C\eta)n$, we have
\begin{equation*}
  \label{remain_sum}
h(V_i\setminus W_i)\le (1/4+7C\eta)n-(1-2C\eta)(\frac{1}{4}-4C\eta-4C'\eta)n\le C''\eta n,
\end{equation*}
for some absolute constant $C''$.
Thus for all but at most $\sqrt{C''\eta}n$ vertices $v\in V_i\setminus W_i$,  we have $h(v)\le \sqrt{C''\eta}$. Let $X_i=\{v\in V_i\setminus W_i:h(v)\le \sqrt{C''\eta}\}$. Note that 
\[|V_i\setminus W_i|\ge n-\left(\frac14+8C\eta\right)n\ge (\frac34-8C\eta)n.\]
Recall $|W_i|\ge (1/4-4C\eta-4C'\eta)n$. Thus we have
\begin{equation*}
  \label{X_i_size}
  (\frac34-8C\eta-\sqrt{C''\eta})n\le |X_i|\le (\frac34+4C\eta+4C'\eta)n.
\end{equation*}

Since \(\eta\ll\xi\), then we have $|W_i|=(1/4\pm\xi)n$ and $|X_i|=(3/4\pm\xi)n$. Moreover, for every $v\in W_i$, we have $g(v)=h(v)\ge 1-2C\eta\ge 1-\xi$, and for every $v\in X_i$, we have $g(v)=h(v)\le \sqrt{C''\eta}\le \xi$. Since \(i\in[3]\) was arbitrary, the same argument gives the required sets
\(W_i,X_i\) for all three vertex classes.
\end{proof}

\section{Hypergraphs not close to $H^0_4(n;n)$}
In this section, we prove the non-close case, namely, we show that every balanced \(4\)-partite \(4\)-graph with minimum vertex degree exceeding \(d_4(n,n-1)\) contains a perfect matching, provided it is not close to \(H^0_4(n;n)\). 
We shall also use the following rainbow fractional matching theorem of
Aharoni, Holzman, and Jiang~\cite{AHJ}.
\begin{theorem}[Aharoni, Holzman, and Jiang,~\cite{AHJ}]\label{rainfracmat}
    Let $r\ge 2$ be an integer, and let $n$ be a positive rational number. Let
    $H_1,\dots,H_{\lceil rn\rceil}$ be $r$-graphs such that $\nu^*(H_i)\ge n$ for
    $i=1,\dots,\lceil rn\rceil$. Then there exist $e_1\in H_1,\dots,e_{\lceil rn\rceil}\in
    H_{\lceil rn\rceil}$ such that $\left\{e_1,\dots,e_{\lceil rn\rceil}\right\}$ has a fractional
    matching of size $n$.
\end{theorem}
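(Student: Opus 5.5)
The statement is the rainbow fractional matching theorem of Aharoni, Holzman and Jiang, and nothing earlier in this paper is used for it. I would prove it by an exchange argument driven by linear programming duality, in the spirit of a colorful Carathéodory argument. Write $N=\lceil rn\rceil$. For a set $F$ of edges, $\nu^*(F)\ge n$ means the following: there are weights $\lambda_e\ge 0$ with $\sum_e\lambda_e=1$ such that $n\sum_e\lambda_e\chi_e\le \mathbf 1$ coordinatewise, where $\chi_e$ is the indicator vector of $e$ in $\mathbb R^V$. The aim is to pick one edge from each $H_i$ so that the point $\mathbf 1/n$ dominates a convex combination of the chosen indicator vectors.

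Step one sets up an extremal rainbow choice. Among all rainbow choices $(e_1,\dots,e_N)$ with $e_i\in H_i$, take one maximizing $\nu^*(\{e_1,\dots,e_N\})$. Among those, take one whose minimum fractional vertex cover $g$ (by duality, the optimum of the dual LP) has the smallest possible size, or more generally a suitable lexicographic potential. Suppose for contradiction that $\nu^*<n$. By duality there is a cover $g:V\to[0,1]$ of the chosen edges with $g(V)<n$.

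Step two brings in the hypothesis. Each $H_i$ has $\nu^*(H_i)\ge n>g(V)$, so $g$ cannot be a fractional cover of $H_i$. Hence every color class contains an edge $e_i'$ with $g(e_i')<1$. The goal is to swap some $e_i$ for $e_i'$ and strictly improve the extremal choice, which contradicts extremality.

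Step three uses complementary slackness to locate a removable color. Take an optimal fractional matching $f$ of the chosen multiset of edges that is a basic solution. Its support is at most the number of tight vertex constraints. Summing the tight constraints and using that each edge has exactly $r$ vertices gives $r\,f(E)=\sum_{v}\sum_{e\ni v}f(e)$, together with $f(E)=g(V)<n$. So the tight vertices carry total load less than $rn\le N$. A basic solution therefore has fewer than $N$ edges in its support, and some color $i$ has $f(e_i)=0$.

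Step four does the exchange. Replace $e_i$ by $e_i'$. Because $f(e_i)=0$, the fractional matching $f$ survives the swap, so $\nu^*$ does not drop. Because $g(e_i')<1$, the old cover $g$ is no longer feasible. The new optimum therefore either strictly increases $\nu^*$, or changes the dual in a direction that decreases the chosen potential. Either outcome contradicts extremality.

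I expect step four to be the main obstacle, because a single swap need not strictly increase $\nu^*$: the dual can simply move to another optimal cover. Termination therefore has to come from a carefully chosen potential. The first thing I would try is the pair consisting of $\nu^*$ and the lexicographically smallest optimal cover vector, ordered suitably, showing that each swap strictly improves this pair. If that fails, I would switch to a genuinely topological route: define a KKM-type covering of the simplex of weight vectors indexed by the $N$ colors and apply Gale's colorful KKM theorem. The counting in step three, that there are at most $rn$ tight degrees of freedom, is what makes $\lceil rn\rceil$ colors suffice. The integrality of $N$ is exactly where the ceiling enters, since $n$ is only assumed rational.
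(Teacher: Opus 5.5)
The paper does not prove this statement. It is quoted from Aharoni, Holzman and Jiang and used as a black box, so your argument has to stand on its own. Steps two and three are fine. A basic optimal fractional matching of the chosen edges has support at most the number of tight vertices, which is at most $r\nu^*<rn\le N$. So some color $i$ has $f(e_i)=0$, and the optimal cover $g$ is violated by some $e_i'\in H_i$.

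\textbf{The gap is step four, and it is the entire difficulty.} Swapping $e_i$ for $e_i'$ is a simplex pivot on a column of positive reduced cost $1-g(e_i')$. If the optimal basic solution is degenerate, that pivot can have step length $0$. Then $\nu^*$ does not increase, only the dual moves, and nothing you wrote prevents the swaps from cycling. Your other fallbacks do not close this:
\begin{itemize}
\item The secondary criterion in step one is vacuous, since the minimum cover size equals $\nu^*$ by duality.
\item The lexicographically smallest optimal cover is not monotone under a swap. The new cover polytope is obtained by dropping the constraint $g(e_i)\ge 1$ and adding $g(e_i')\ge 1$, so neither polytope contains the other.
\item The colorful KKM route is only named, not carried out.
\end{itemize}
As written, the proposal is not a proof.

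\textbf{The missing idea is nondegeneracy, which you can force by perturbing the capacities.}
\begin{enumerate}
\item Replace $\mathbf 1$ by $b=\mathbf 1+\epsilon$ with $\epsilon>0$ small and generic. Then every basic solution of every LP $\max\{\sum_e f(e):\sum_{e\ni v}f(e)\le b_v,\ f\ge 0\}$ on a multiset of edges from $\bigcup_i H_i$ is nondegenerate, because the bad $b$ lie in finitely many proper subspaces.
\item Take a rainbow $R$ maximizing $\nu_b^*(R)$, and suppose $\nu_b^*(R)<n$.
\item Your count survives, because tight vertices now carry load $b_v\ge 1$. So the optimal basis uses at most $N-1$ colors. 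By nondegeneracy the unused $e_i$ is nonbasic, so the basis is still a basis after the swap.
\item The optimal dual $g\ge 0$ satisfies $\sum_v b_v g_v<n$. A fractional matching of size $n$ in $H_i$ therefore gives $e_i'\in H_i$ with $g(e_i')<1$.
\item The pivot is now nondegenerate, so $\nu_b^*(R-e_i+e_i')>\nu_b^*(R)$, contradicting the choice of $R$.
\end{enumerate}
Hence for every such $\epsilon$ some rainbow set has $\nu^*_{\mathbf 1+\epsilon}\ge n$. There are finitely many rainbow sets, and $\nu^*(R)\ge \nu^*_{\mathbf 1+\epsilon}(R)/(1+\|\epsilon\|_\infty)$. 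Letting $\epsilon\to 0$ yields a rainbow set with $\nu^*\ge n$.
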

The next lemma provides the absorbing matching used in the argument.
\begin{lemma}[Lo and Markstr\"om,~\cite{lo_markstrom_2014}]\label{absorb}
Let  $1 \leq \ell<k$, $0<\gamma< 1 /\left(10 k^{3}\right)$ and $\gamma^{\prime}=\gamma^{2 k-1} / 20$.
Then there is an integer $n_{0}$ such that for all $n>n_{0}$ the following holds:
Suppose $H$ is a $k$-partite $k$-graph with $n$ vertices in each class and minimum $\ell$-degree
$\delta_{\ell}(H) \geq(1 / 2+\gamma) n^{k-\ell}$.
Then there exists a matching $M$ in $H$ of size $|M| \leq(k-1) \gamma^{k} n$ such that, for every balanced set $W$ of size $|W| \leq k \gamma^{\prime} n$, there exists a matching covering exactly the vertices of $V(M) \cup W$.
\end{lemma}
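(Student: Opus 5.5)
The plan is the standard absorbing method of R\"odl, Ruci\'nski and Szemer\'edi, adapted to the partite setting. We use balanced $k$-sets (one vertex from each class) as the basic objects to be absorbed.

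First we reduce the hypothesis to a vertex-degree condition. If $\delta_\ell(H)\ge(1/2+\gamma)n^{k-\ell}$, then for any vertex $v\in V_i$ we can sum $d_H(T)$ over the $\ell$-tuples $T$ containing $v$ and meeting a fixed set of $\ell-1$ further classes. Each edge through $v$ is counted at most $n^{\ell-1}$-fold in the appropriate normalization, which gives $\delta_1(H)\ge(1/2+\gamma)n^{k-1}$.

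\emph{Absorbers.} For a balanced $k$-set $S=\{v_1,\dots,v_k\}$ with $v_i\in V_i$, call a balanced $k^2$-set $T$ disjoint from $S$ an \emph{$S$-absorber} if both $H[T]$ and $H[T\cup S]$ have perfect matchings. We build absorbers as follows.
\begin{itemize}
\item Choose an edge $e=\{u_1,\dots,u_k\}$ disjoint from $S$.
\item For each $i\in[k]$, choose a $(k-1)$-set $W_i$ with one vertex in each $V_j$, $j\neq i$, such that $W_i\cup\{u_i\}$ and $W_i\cup\{v_i\}$ are both edges.
\item Make all the chosen sets pairwise disjoint and set $T=e\cup W_1\cup\dots\cup W_k$.
\end{itemize}
Each class contains exactly one vertex of $e$ and $k-1$ vertices from the sets $W_i$ with $i\ne j$, so $T$ is balanced of size $k^2$. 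The sets $\{W_i\cup\{u_i\}\}_{i\in[k]}$ form a perfect matching of $T$. The edge $e$ together with the sets $\{W_i\cup\{v_i\}\}_{i\in[k]}$ forms a perfect matching of $T\cup S$.

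\emph{Counting absorbers.} The links of $u_i$ and $v_i$ are $(k-1)$-partite $(k-1)$-graphs on the same $n^{k-1}$ tuples. Each has at least $(1/2+\gamma)n^{k-1}$ edges, so their common link has at least $2\gamma n^{k-1}$ edges. There are at least $n\delta_1(H)/k\ge n^k/(2k)$ choices for $e$. Discarding the $O(n^{k^2-1})$ configurations with repeated vertices, every balanced $S$ has at least $c\,\gamma^k n^{k^2}$ absorbers, where $c$ depends only on $k$.

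\emph{Random selection.} Select each balanced $k^2$-set independently with probability $p=\beta n^{1-k^2}$, where $\beta$ is a suitable multiple of a power of $\gamma$. We use three estimates.
\begin{itemize}
\item By Chernoff bounds and a union bound over the $n^k$ sets $S$, with high probability the selected family $\mathcal F$ has size at most $2\beta n$.
\item With the same bounds, every $S$ has at least $\tfrac12 c\gamma^k\beta n$ selected absorbers.
\item By Markov's inequality, the expected number of intersecting pairs in $\mathcal F$ is $O(\beta^2 n)$, so with probability at least $1/2$ there are at most $O(\beta^2 n)$ such pairs.
\end{itemize}
Delete one member of each intersecting pair and every member that is not an absorber for any $S$. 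Since $\beta$ is small compared with $\gamma^k$, the deletions cost much less than half the absorbers. This leaves a family $\mathcal F'$ of pairwise disjoint absorbers in which every $S$ still has at least $\gamma' n$ absorbers. Let $M$ be the union of the perfect matchings of the sets $T\in\mathcal F'$; it has $k|\mathcal F'|$ edges. We tune $\beta$ so that $|M|\le(k-1)\gamma^kn$ while each $S$ retains at least $\gamma^{2k-1}n/20=\gamma'n$ absorbers. The condition $\gamma<1/(10k^3)$ is what makes these two requirements compatible.

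\emph{Absorbing.} Let $W$ be balanced with $|W|\le k\gamma'n$. Partition $W$ into at most $\gamma'n$ balanced $k$-sets $S_1,S_2,\dots$. Assign to them distinct absorbers $T_1,T_2,\dots\in\mathcal F'$ greedily; this succeeds because each $S_j$ has at least $\gamma'n$ absorbers and we need fewer than that. For each $j$, replace the matching on $T_j$ by a perfect matching of $T_j\cup S_j$, and keep the original matchings on all other absorbers. This gives a matching covering exactly $V(M)\cup W$.

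The only delicate step is the constant bookkeeping in the random selection: $\beta$ must be small enough that $|M|\le(k-1)\gamma^kn$ and the intersecting-pair losses are negligible, yet large enough that $\gamma^{2k-1}n/20$ absorbers per $S$ survive. I would fix $\beta$ on the order of $\gamma^{k}$ divided by a constant depending on $k$ and check these inequalities directly.
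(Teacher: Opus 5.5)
The paper does not prove this lemma; it is quoted from Lo and Markstr\"om. So I am judging your argument on its own, against the absorbing proof that produces these constants.

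Your framework (absorbers, random selection, deleting overlaps, greedy absorption) is the right one. However, the final step, where you ``tune $\beta$'', cannot be carried out. As written you do not get $\gamma'=\gamma^{2k-1}/20$ together with $|M|\le(k-1)\gamma^kn$. Your absorber is built from an edge $e$ \emph{disjoint} from $S$ plus $k$ common-link choices. The density of $S$-absorbers you certify among balanced $k^2$-sets is therefore only about $2^{k-1}\gamma^k$: there are at least $\tfrac12 n^k(2\gamma n^{k-1})^k$ configurations, each $T$ arises from at most $(k!)^k$ of them, and there are at most $n^{k^2}/(k!)^k$ balanced $k^2$-sets. So you can only guarantee that about a $2^{k-1}\gamma^k$-fraction of the random family absorbs a given $S$.

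Each member contributes $k$ edges, so $|M|\le(k-1)\gamma^kn$ forces $|\mathcal F'|\le\frac{k-1}{k}\gamma^kn$. The guaranteed number of absorbers per $S$ is then at most about $2^{k-1}\gamma^{2k}n$, even before you shrink $\beta$ further to make the $O(\beta^2 n)$ overlaps negligible. The lemma must hold for every $\gamma\in(0,1/(10k^3))$, so this falls short of $\gamma^{2k-1}n/20$ by a factor of order $1/\gamma$. Smallness of $\gamma$ works against you here, contrary to your closing remark. Indeed, with $\beta$ of order $\gamma^k$ your absorber count and your overlap loss are both of order $\gamma^{2k}n$, so there is no spare factor for the hypothesis $\gamma<1/(10k^3)$ to exploit.

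The missing idea is to build the absorber around an edge through a vertex of $S$, which saves one common-link choice. Take an edge $\{v_1,u_2,\dots,u_k\}$ with $u_j\in V_j$. For $j=2,\dots,k$, take a $(k-1)$-set $W_j$ in the common link of $u_j$ and $v_j$, with all these sets pairwise disjoint and avoiding $S$. Then $T=\{u_2,\dots,u_k\}\cup W_2\cup\dots\cup W_k$ is a balanced $k(k-1)$-set. $H[T]$ is perfectly matched by the $k-1$ edges $W_j\cup\{u_j\}$, and $H[T\cup S]$ by $\{v_1,u_2,\dots,u_k\}$ together with the edges $W_j\cup\{v_j\}$.

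The same count gives $S$-absorber density at least about $2^{k-2}\gamma^{k-1}$ among balanced $k(k-1)$-sets, and each absorber carries only $k-1$ edges. Choose the selection probability so that $\mathbb E|\mathcal F|=\gamma^kn/2$. This gives $|M|\le(k-1)\gamma^kn$ and order $\gamma^{k-1}\cdot\gamma^kn=\gamma^{2k-1}n$ absorbers per $S$. The expected number of intersecting pairs is $O(k^3\gamma^{2k}n)$, and this is where $\gamma<1/(10k^3)$ is used: it keeps the overlap loss below $\gamma^{2k-1}n/20$.

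Your version does prove the lemma with $\gamma'$ replaced by some $c_k\gamma^{2k}$. That would still suffice for the application in the proof of Theorem~\ref{not_close_main}, where $\sigma$ can be taken as small as needed, but it does not prove the statement as given.
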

To obtain an almost perfect matching in a sparse quasi-regular hypergraph, we
use the following theorem of Frankl and R\"odl~\cite{FR85} whose proof is based on R\"odl nibble technique. 
\begin{theorem}[Frankl and R\"odl,~\cite{FR85}]\label{nibble}
    For every integer $k\ge 2$ and any real $\sigma >0$, there exist $\tau = \tau(k,\sigma)$ and
    $d_0 = d_0(k,\sigma)$ such that for every $n\ge D\ge d_0$ the following holds:
    Every $n$-vertex $k$-graph $H$ with $(1-\tau)D<d_H(v)< (1+\tau)D$ for any $v\in V(H)$ and
    $\Delta_2(H)<\tau D$ contains a matching covering all but at most $\sigma n$ vertices.
\end{theorem}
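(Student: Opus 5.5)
The statement is the Frankl--R\"odl theorem, and I would prove it by the R\"odl nibble, following the Pippenger--Spencer presentation. Throughout I take $\tau$ small and $d_0$ large in terms of $k,\sigma$ and a nibble parameter $\epsilon=\epsilon(k,\sigma)$.

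\emph{One nibble.} Let $H$ be a $k$-graph with all degrees in $(1\pm\tau)D$ and $\Delta_2(H)<\tau D$. Select each edge independently with probability $\epsilon/D$. Put into the matching every selected edge that meets no other selected edge. Then delete all vertices covered by selected edges, together with their incident edges.

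For a fixed vertex $v$ I would compute three quantities using only the degree and codegree conditions.
\begin{itemize}
\item The probability that $v$ is covered is $1-e^{-\epsilon}\pm O(\tau+\epsilon^2)$.
\item The probability that $v$ survives is $e^{-\epsilon}\pm O(\tau)$.
\item If $v$ survives, its new degree has expectation $D'=D e^{-\epsilon(k-1)}(1\pm O(\tau+\epsilon^2))$.
\end{itemize}
For the last estimate, an edge $e\ni v$ survives iff none of the roughly $(k-1)D$ edges meeting $e\setminus\{v\}$ is selected. The codegree bound makes these events almost independent, because two vertices of $e$ share fewer than $\tau D$ edges. The same reasoning shows that the covered edges are almost all matching edges, since a selected edge collides with another selected edge with probability only $O(\epsilon)$.

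\emph{Concentration and bad vertices.} Since $n$ may be much larger than $D$, I will not try to control every vertex. Instead I aim for the following: all but a $\theta$-fraction of the surviving vertices have new degree $D'(1\pm\tau')$, where $\tau'=\tau+O(\epsilon^2)+o_D(1)$.

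The degree of $v$ in the new graph is determined by the selection of edges within distance $2$ of $v$. There are at most $O(D^2)$ such edges, and each edge changes the count by at most $O(\tau D)$ thanks to the codegree bound. So a second-moment (Chebyshev) bound shows $\Pr[v \text{ is bad}]\to 0$ as $D\to\infty$. Markov's inequality then makes the number of bad vertices at most $\theta n$ with positive probability.

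I remove the bad vertices, and declare them uncovered, permanently. Maximum codegree only decreases under deletion, while degrees shrink by the factor $e^{-\epsilon(k-1)}$. So the ratio $\Delta_2/D'$ grows by at most $e^{\epsilon(k-1)}$ per round, which remains small over a bounded number of rounds.

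\emph{Iteration.} I run $T$ rounds, with $T$ chosen so that $e^{-\epsilon T}<\sigma/3$. This is a constant depending on $k,\sigma$. The fraction of vertices still uncovered after $T$ rounds is then roughly
\[
e^{-\epsilon T}+T\theta+O(T\epsilon),
\]
where the first term counts survivors, the second counts vertices discarded as bad, and the third counts vertices lost to collisions among selected edges. By choosing $\epsilon$, then the per-round $\theta$, and then $\tau$ and $d_0$ small or large enough, I make the total at most $\sigma n$. The degree parameter at the end is $De^{-\epsilon(k-1)T}$, which is still at least a constant times $D$ and hence large. This keeps the concentration estimates valid in every round.

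\emph{Main obstacle.} I expect the hardest step to be the error bookkeeping across the $T$ rounds. The relative degree error grows like $\tau_{j+1}\le \tau_j+C(\epsilon^2+\tau_j\epsilon)+o(1)$. I need $\tau_T$ to stay below a small constant, which requires taking $\epsilon$ and $\tau$ small in terms of $T$, while $T$ itself depends on $\epsilon$. My first attempt is to note that $\epsilon^2 T\approx\epsilon\log(1/\sigma)$ is small, and that the factor $(1+C\epsilon)^T=O_\sigma(1)$ is bounded. With these two observations the recursion closes.
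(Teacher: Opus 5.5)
The paper gives no proof of this statement; it is quoted from the literature. The form with $\Delta_2(H)<\tau D$ and $n\ge D$ is the Pippenger--Spencer version of Frankl--R\"odl. Your overall route is the standard one: iterated $\epsilon$-nibbles, expectations computed from the degree and codegree hypotheses, and a codegree ratio growing by $e^{\epsilon(k-1)}$ per round. However, the iteration does not close as you set it up.

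\textbf{The gap: deleting bad vertices breaks your invariant.} Your one-nibble computation of $\mathbb{E}\,d'(v)$ uses the bound $(1\pm\tau_j)D_j$ on the degree of \emph{every} vertex of every edge through $v$. That bound is what makes each such edge survive with probability $e^{-\epsilon(k-1)}(1\pm O(\epsilon\tau_j))$. So round $j+1$ requires every vertex of the new hypergraph to have degree $(1\pm\tau_{j+1})D_{j+1}$.

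After you delete the bad set $B$, a good vertex has lost all its edges meeting $B$, and nothing bounds this loss for every vertex. From $|B|\le\theta|V_{j+1}|$ and the upper bound on degrees, double counting only gives that all but $O(k\sqrt{\theta})|V_{j+1}|$ vertices lose at most $\sqrt{\theta}D_{j+1}$ edges. The remaining vertices may fall far below $(1-\tau_{j+1})D_{j+1}$. Deleting them too can cascade without control: a vertex is pushed out after losing only $\tau_{j+1}D_{j+1}$ edges, while each deleted vertex destroys about $D_{j+1}$ edges.

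The missing idea is to weaken the invariant instead of deleting. Require only that:
\begin{itemize}
\item all but $\delta|V|$ vertices have degree $(1\pm\delta)D$;
\item \emph{every} vertex has degree less than $KD$;
\item all codegrees are less than $\delta D$.
\end{itemize}
Exceptional vertices stay in the hypergraph. Since degrees never increase, $K$ grows only by a factor $e^{\epsilon(k-1)}$ per round. In the one-nibble lemma, analyse only vertices whose degree is in range and at most $\sqrt{\delta}D$ of whose edges meet an exceptional vertex. By the bound $KD$ and double counting, these are all but $O(kK\sqrt{\delta})|V|$ vertices, and for them your computation goes through. This is Pippenger's argument as presented by Alon and Spencer. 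It forces a hierarchy $\delta_0\ll\delta_1\ll\cdots\ll\delta_T$, with the initial $\tau=\delta_0$ chosen last.

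Your error bookkeeping also needs correcting, though this part is repairable.
\begin{itemize}
\item The probability that $v$ becomes bad does not tend to $0$ as $D\to\infty$. Suppose $v$ has about $1/\tau$ vertices $u$ with $d(u,v)\approx\tau D$. Then $d'(v)$ fluctuates by $\Theta(\sqrt{\epsilon\tau})D$ according to which of these $u$ are covered. Chebyshev therefore only gives $\Pr[|d'(v)-\mathbb{E}\,d'(v)|>\lambda D]=O(\epsilon\tau_2/\lambda^2)+O(1/(\lambda^2D))$, where $\tau_2$ is the current codegree ratio. Smallness must come from choosing $\tau$ small relative to $\lambda$ and $\epsilon$, not from $D\to\infty$.
\item The per-round errors carry a factor $\epsilon$. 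A vertex survives with probability $e^{-\epsilon}(1\pm O(\epsilon\tau_j))$, and collisions cost only $O(k\epsilon^2)|V_j|$ vertices. With your stated $O(\tau_j)$ errors and collision term $O(T\epsilon)$, the total over $T\approx\epsilon^{-1}\log(3/\sigma)$ rounds is of order $\log(1/\sigma)$, because $T\epsilon$ is a constant. With the correct terms it is $O\bigl(\log(3/\sigma)(k\epsilon+\max_j\tau_j)\bigr)$, which is small.
\item To keep the number of survivors near $e^{-\epsilon}|V_j|$ simultaneously with the other events, you need Chebyshev rather than Markov. It applies because the survival indicators of $u$ and $w$ have covariance $O(\epsilon\,d(u,w)/D)$, which sums to $O(\epsilon k|V_j|)$.
\end{itemize}
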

We are now ready to prove the non-close case of
Theorem~\ref{main_theorem}.
\begin{theorem}\label{not_close_main}
Let $0<\varepsilon\ll 1$, and let $n$ be sufficiently large. Let $H$ be a
$4$-partite $4$-graph with vertex classes $V_1,V_2,V_3,V_4$, each of size
$n$. Suppose that $H$ is not $\varepsilon$-close to $H^0_4(n;n)$ and
\[
    \delta_1(H)>d_4(n,n-1).
\]
Then $H$ contains a perfect matching.
\end{theorem}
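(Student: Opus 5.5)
We plan to prove Theorem~\ref{not_close_main} by the absorbing method combined with fractional matchings and the Frankl--R\"odl nibble. First note that $d_4(n,n-1)=(37/64+o(1))n^3$, since $1-(3/4)^3=37/64$. In particular $\delta_1(H)\ge(1/2+\gamma_0)n^3$ for a small absolute $\gamma_0>0$. We fix constants $1/n\ll\gamma'\ll\gamma\ll\varepsilon'\ll\varepsilon$, with $\gamma$ small enough for Lemma~\ref{absorb} (taking $k=4$, $\ell=1$).

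\textbf{Step 1: absorbing matching.} Apply Lemma~\ref{absorb} to obtain an absorbing matching $M$ with $|M|\le 3\gamma^4 n$. This $M$ absorbs every balanced set $W$ with $|W|\le 4\gamma' n$. Let $H_1=H-V(M)$; it is balanced with classes of size $n_1=n-|M|$. Deleting $V(M)$ changes each vertex degree by at most $12\gamma^4 n^3$, so $\delta_1(H_1)\ge(37/64-\gamma_1)n_1^3$ with $\gamma_1=O(\gamma^4)$. Likewise $H_1$ is still not $(\varepsilon/2)$-close to $H_4^0(n_1;n_1)$, since removing few vertices alters at most $O(\gamma^4n^4)$ of the relevant edges. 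Theorem~\ref{thm:frac-mat} then gives $H_1$ a perfect fractional matching. The same holds robustly for any $H_1-R$ with $R$ a balanced random-like set of size $o(n)$.

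\textbf{Step 2: almost perfect matching in $H_1$.} We convert robust perfect fractional matchings into an almost perfect integral matching, following the standard Alon--Frankl--Huang--R\"odl--Ruci\'nski--Sudakov route. Take a random partition of each $V_i\cap V(H_1)$ into parts of a large constant size $L$. By concentration, almost every induced balanced $4L$-vertex subgraph inherits $\delta_1\ge(37/64-2\gamma_1)L^3$ and non-closeness. Theorem~\ref{thm:frac-mat}, or a small-case version of it obtained via Lemma~\ref{lem:frac-3-mat0}, then yields a perfect fractional matching in each such subgraph. Alternatively, Theorem~\ref{rainfracmat} can be used to build the fractional matchings compatibly across parts. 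Weighting edges by these fractional matchings and sampling produces an auxiliary $4$-graph that is almost regular with small codegree. Theorem~\ref{nibble} then gives a matching $M_1$ of $H_1$ covering all but $\sigma n\le\gamma' n$ vertices. Because $H_1$ is $4$-partite and $M_1$ uses one vertex from each class per edge, the uncovered set $W$ is balanced.

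\textbf{Step 3: absorption.} Since $|W|\le 4\gamma' n$, Lemma~\ref{absorb} gives a matching covering exactly $V(M)\cup W$. Together with $M_1$ this is a perfect matching of $H$.

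The main obstacle is Step 2: the fractional-matching conclusion must pass to random small subhypergraphs (or to an almost-regular spanning subgraph) while preserving both the sharp degree bound $37/64-o(1)$ and the non-closeness hypothesis. The degree bound concentrates easily. Non-closeness is more delicate, because a small sample might look close to $H_4^0$ even when $H$ is not. We would handle this through the stability structure: if a sample $S$ were close, then the sets $W_i,X_i$ produced in the proof of Theorem~\ref{thm:frac-mat} would lift, via a counting or regularity-type argument, to show that $H$ itself is $\varepsilon$-close, a contradiction. If this lifting proves awkward, the fallback is to run Theorem~\ref{thm:frac-mat} once on $H_1$ and use the vertex cover $g$ directly to construct an almost regular spanning subgraph via LP perturbation, again finishing with Theorem~\ref{nibble}.
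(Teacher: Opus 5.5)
\textbf{Verdict: genuine gap in Step 2.} Your overall architecture matches the paper's: an absorbing matching from Lemma~\ref{absorb}, then an almost perfect matching of $H-V(M)$ via fractional matchings and Theorem~\ref{nibble}, then absorption of the balanced leftover. Steps 1 and 3 are fine. The problem is Step 2, the only nontrivial step, which you sketch but do not carry out.

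\textbf{Why Step 2 does not go through as proposed.}

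\emph{The sampling route.} One random partition into constant-size parts only produces a single perfect fractional matching of $H_1$, which is useless for the nibble. The Alon--Frankl--Huang--R\"odl--Ruci\'nski--Sudakov route needs many overlapping samples of polynomial size. Each of those samples must then satisfy the hypotheses of Theorem~\ref{thm:frac-mat}, including non-closeness. Non-closeness of a sample does not follow from non-closeness of $H$ by any simple argument. Closeness of $H[S]$ is witnessed by sets $W_i^S$ living inside $S$, and turning these into global sets for $H$ is a property-testing statement for a partition property. You flag this obstacle but only assert the ``lifting''.

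\emph{The fallback.} A single perfect fractional matching from Theorem~\ref{thm:frac-mat} may be (nearly) integral. Scaling and sampling it then gives neither degrees tending to infinity nor codegrees $o(D)$. Theorem~\ref{nibble} needs a \emph{spread} perfect fractional matching: every edge weight at most $1/t$ and every pair load $O(1/t)$, for some $t\to\infty$. ``LP perturbation'' of the cover $g$ does not explain where such an object comes from.

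\textbf{The missing idea.} You never need to pass to vertex samples. Theorem~\ref{thm:frac-mat} is robust under \emph{edge deletion}, and non-closeness trivially survives deleting $o(n^4)$ edges. The paper builds greedily $t=\lfloor n'/\log n'\rfloor$ perfect fractional matchings $f_1,\dots,f_t$ of $H'=H-V(M)$ with these properties:
\begin{enumerate}[label=(\alph*)]
\item the supports are pairwise disjoint;
\item each support has size at most $4n'$, obtained via Theorem~\ref{rainfracmat} (or a basic optimal solution);
\item the total load on every pair is at most $4$.
\end{enumerate}

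At step $s+1$, delete from $H'$ the earlier supports and every edge containing a pair whose current load exceeds $3$. Each $f_i$ puts load exactly $3$ on the pairs at a fixed vertex and $6n'$ on all pairs in total. So each vertex has at most $s$ overloaded partners, and there are at most $2sn'$ overloaded pairs. Hence each vertex loses at most $4sn'^2=o(n'^3)$ edges, and only $4sn'^3=o(n'^4)$ edges are removed overall.

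The remaining hypergraph therefore still satisfies the hypotheses of Theorem~\ref{thm:frac-mat}, so it has a perfect fractional matching $f_{s+1}$. Its load on any pair is at most $1$, which keeps all pair loads at most $4$.

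Then $h=\sum_i f_i\le 1$, by disjointness of supports, has vertex load exactly $t$ and pair load at most $4$. Keeping each edge independently with probability $h(e)$ gives, by Chernoff, a $(1\pm 1/\log n')t$-regular spanning subgraph with codegrees at most $\sqrt{n'}$. Theorem~\ref{nibble} applies to this subgraph, and your Step 3 finishes the proof.
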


\begin{proof}
Choose constants satisfying
$
    1/n\ll \alpha\ll \gamma\ll \varepsilon\ll 1.
$
By the definition of $d_4(n,n-1)$, for sufficiently large $n$ we have
\[
    d_4(n,n-1)\ge \left(\frac{37}{64}-\frac{\gamma}{10}\right)n^3>\left(\frac12+\alpha\right)n^3.
\]
Applying Lemma~\ref{absorb} with $k_{\ref{absorb}}=4$, $\ell_{\ref{absorb}}=1$, and $\gamma_{\ref{absorb}} = \alpha$, we obtain a matching $M_{\rm abs}$ in $H$ with
$|M_{\rm abs}|\le 3\alpha^4 n$ such that for every balanced set 
$W\subseteq V(H)\setminus V(M_{\rm abs})$ with $|W|\le \alpha^7 n/5$,
there is a matching in $H$ covering exactly
$V(M_{\rm abs})\cup W$.

Let
$
    H':=H-V(M)$ and 
    $n':=n-|M|.
$
Then $H'$ is a balanced $4$-partite $4$-graph with $n'$ vertices in each
class. Since $|M_{\rm abs}|\le 3\alpha^4 n$, for every $v\in V(H')$ we have
\[
    d_{H'}(v)
    \ge d_H(v)-3|M_{\rm abs}|n^2
    \ge
    \left(\frac{37}{64}-\frac{\gamma}{5}\right)n'^3.
\]
Moreover, $H'$ is not $(\varepsilon/2)$-close to $H^0_4(n';n')$. Indeed, if
$H'$ is $(\varepsilon/2)$-close to $H_4(n';n')$, then since
$|M|=O(\alpha^4 n)$ and $\alpha\ll\varepsilon$, 
the original graph $H$ would be $\varepsilon$-close to $H^0_4(n;n)$, a contradiction.

\medskip
\noindent
\textbf{Claim 1.}
Let $t=\left\lfloor n'/\log n'\right\rfloor$.
There exist perfect fractional matchings $f_1,\dots,f_t$ in $H'$ such that,
if
$
    S_i:=\operatorname{supp}(f_i)=\{e\in E(H'): f_i(e)>0\},
$
then all of the followings hold:
\begin{enumerate}[label=(\roman*)]
    \item $S_1,\dots,S_t$ are pairwise disjoint;
    \item $|S_i|\le 4n'$ for every $i\in[t]$;
    \item for every pair $D\in \binom{V(H')}{2}$,
    $
        \sum_{i=1}^t
        \sum_{\substack{e\in E(H'), D\subseteq e}}
        f_i(e)
        \le 4.
    $
\end{enumerate}

\medskip
\noindent
\emph{Proof of Claim 1.}
%We construct the fractional matchings greedily. 
Suppose to the contrary that
$f_1,\dots,f_s$ are perfect fractional matchings satisfying (i), (ii), (iii) such that $s$ is maximum. So $0\le s<t$.

Define
\[
U_s:=
\left\{
D\in \binom{V(H')}{2}:
\sum_{i=1}^s
\sum_{\substack{e\in E(H')\\ D\subseteq e}}
f_i(e)>3
\right\},
\]
and let
$
E_s:=
\{e\in E(H'):\text{ there exists }D\in U_s\text{ such that }D\subseteq e\}.
$
Also put $F_s:=E_s\cup \bigcup_{i=1}^s S_i$. We shall show that $G_s:=H'-F_s$ still satisfies the assumptions of Theorem~\ref{thm:frac-mat}, and hence contains a
perfect fractional matching.

First fix a vertex $v\in V(H')$. For each $i\in[s]$, since $f_i$ is a perfect fractional matching, $\sum_{e\ni v} f_i(e)=1$. Hence
\[
    \sum_{w\in V(H')\setminus\{v\}}
    \sum_{\substack{e\in E(H')\\ \{v,w\}\subseteq e}}
    f_i(e)
    =3.
\]
Summing over $i\in[s]$, we have
\[\sum_{w\in V(H')\setminus\{v\}}\sum_{i=1}^s\sum_{\substack{e\in E(H')\\ \{v,w\}\subseteq e}}
    f_i(e)=\sum_{i=1}^s\sum_{w\in V(H')\setminus\{v\}}
    \sum_{\substack{e\in E(H')\\ \{v,w\}\subseteq e}}
    f_i(e)
    =3s
\]
So there are at most $s$ vertices $w$ such that
the pair $\{v,w\}$ belongs to $U_s$. Thus the number of edges of $E_s$
containing $v$ and containing a pair from $U_s$ which includes $v$ is at most $sn'^2$.

On the other hand, since each $4$-edge contains six pairs and each $f_i$ has size $n'$, we have
\[
    \sum_{D\in\binom{V(H')}{2}}
    \sum_{i=1}^s
    \sum_{\substack{e\in E(H')\\D\subseteq e}}
    f_i(e)
    =
    6sn',
\]
Hence $|U_s|\le 2sn'$.
Thus the number of edges of $E_s$ containing $v$
and containing a pair from $U_s$ not involving $v$ is at most $2sn'^2$.
Consequently,
$
    d_{E_s}(v)\le 3sn'^2$.
Moreover, since $|S_i|\le 4n'$ for $i\le s$, we have $d_{\bigcup_{i=1}^s S_i}(v)\le 4sn'$.
Thus
$
    d_{F_s}(v)\le 4sn'^2$.
Since $s<t\le n'/\log n'$, it follows that
\[
    d_{G_s}(v)
    \ge d_{H'}(v)-4sn'^2
    \ge
    \left(\frac{37}{64}-\frac{\gamma}{3}\right)(n')^3
\]
for sufficiently large $n$.

Next, $G_s$ is not $(\varepsilon/4)$-close to $H^0_4(n';n')$.
Indeed,
\[
    |F_s|
    \le \frac14\sum_{v\in V(H')} d_{F_s}(v)
    \le 4sn'^3
    \le \frac{4(n')^4}{\log n'}
    =o(\varepsilon (n')^4).
\]
Thus, if $G_s$ is $(\varepsilon/4)$-close to $H^0_4(n';n')$, then $H'$ would be
$(\varepsilon/3)$-close to $H^0_4(n';n')$, and hence $H$ would be
$\varepsilon$-close to $H^0_4(n;n)$, a contradiction.

Therefore Theorem~\ref{thm:frac-mat}, applied with parameters
$\gamma/3$ and $\varepsilon/4$, implies that $G_s$ has a perfect fractional
matching. Applying Theorem~\ref{rainfracmat}, we obtain a perfect fractional
matching $f_{s+1}$ in $G_s$ whose support
$
   S_{s+1}:=\operatorname{supp}(f_{s+1})
$
has size at most $4n'$, satisfying (ii).

Since $S_{s+1}\subseteq E(G_s)$, it is disjoint from
$S_1,\ldots,S_s$ and from $E_s$, satifying (i).
If $D\notin U_s$, then $$\sum_{i=1}^s
    \sum_{\substack{e\in E(H')\\D\subseteq e}}
    f_i(e)
    \le 3.$$
Since
$f_{s+1}$ is a fractional matching, the contribution of $f_{s+1}$ to any fixed pair $D$ is at most $1$. Hence the total contribution to \(D\) is
at most \(4\).
If $D\in U_s$, then every edge containing $D$ lies in $E_s$, and hence no
edge of $S_{s+1}$ contains $D$, satisying (iii).
This proves the claim.
%Therefore the same bound holds in this case as well. This completes the greedy step and proves the claim.
$\hfill\blacksquare$
\medskip

Define
$
    h:E(H')\to[0,1]
$    and
$    h(e):=\sum_{i=1}^t f_i(e).
$
Since the supports $S_1,\dots,S_t$ are pairwise edge-disjoint, we have
$h(e)\le 1$ for every $e\in E(H')$.
Let $H''$ be a random spanning subgraph of $H'$ obtained by choosing each edge
$e\in E(H')$ independently with probability $h(e)$. For every vertex
$v\in V(H')$, we have
\[
    \mathbb E[d_{H''}(v)]
    =
    \sum_{e\ni v} h(e)
    =
    \sum_{i=1}^t \sum_{e\ni v}f_i(e)
    =
    t.
\]
For every pair $D\in \binom{V(H')}{2}$, we have
\[
    \mathbb E[d_{H''}(D)]
    =
    \sum_{e\supseteq D}h(e)
    =
    \sum_{i=1}^t
    \sum_{\substack{e\in E(H')\\D\subseteq e}}
    f_i(e)
    \le 4.
\]
By Chernoff's bound and union bound, with
probability $1-o(1)$, $H''$ satisfies
\begin{align*}
    \left(1-\frac1{\log n'}\right)t
    \le d_{H''}(v)
    \le
    \left(1+\frac1{\log n'}\right)t
    \qquad
    \text{for all }v\in V(H'), \label{eq:regular-Hpp}
\end{align*}
and $d_{H''}(D)\le \sqrt{n'}$ for all $D\in \binom{V(H')}{2}$.

Fix such a choice of $H''$.
Choose \(\sigma>0\)  with $ 4\sigma n'\le \alpha^7 n/5$.
We apply Theorem~\ref{nibble} to \(H''\) with \(D=t\). For sufficiently large \(n\), Theorem~\ref{nibble} gives a matching
\(M_{\rm near}\) in \(H''\) covering all but at most $\sigma |V(H'')|=4\sigma n'$ vertices.

Let
$
    W:=V(H')\setminus V(M_{\rm near}).
$
Since $H'$ is balanced $4$-partite and every edge of $M_{\rm near}$ contains one vertex
from each class, the set $W$ is balanced. Moreover,
$
    |W|\le 4\sigma n'\le 4\alpha'n.
$
By the absorbing property of $M_{\rm abs}$, there exists a matching $M_{\rm fill}$ in $H$
covering exactly the vertices of $V(M_{\rm abs})\cup W$.
Since $M_{\rm near}$ is disjoint from $V(M_{\rm abs})\cup W$, the union
$
    M_{\rm near}\cup M_{\rm fill}
$
is a perfect matching of $H$. This completes the proof.
\end{proof}

\section{Hypergraphs close to $H^0_k(n;n)$}

In this section, we deal with the close case. The following lemma follows from Theorem~1.6 in \cite{lo_markstrom_2014}; Moreover, the authors established the case when $k=3$.
\begin{lemma}[Lo and Markstr\"{o}m, \cite{lo_markstrom_2014}]\label{partial_matching}
	Let \(k,m,n\) be integers such that \(k\ge3\), \(n\ge k^7m\), and
	\(m\ge (k-1)^2\).  Let \(H\) be a \(k\)-partite \(k\)-graph with each
	vertex class of size \(n\).  If
	\[
	\delta_1(H)>d_k(n,m),
	\]
	then \(H\) contains a matching of size \(m+1\).
\end{lemma}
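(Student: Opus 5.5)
The plan is to reduce the statement to Theorem~1.6 of Lo and Markstr\"om~\cite{lo_markstrom_2014}, checking that their hypotheses match ours, and to reconstruct the shape of their argument as a safeguard. First note that $d_k(n,m')\le d_k(n,m)$ for $m'\le m$, because adding vertices to the sets $W_i$ only adds edges, and $H'_k(n;m')$ is isomorphic to a subgraph of $H'_k(n;m)$. So it is enough to show: if $\delta_1(H)>d_k(n,m)$ and $\nu(H)=m$ exactly, we reach a contradiction. The smaller cases follow by applying the argument at each intermediate size $m'\le m$, provided $m'\ge (k-1)^2$; below that we just grow the matching greedily, since $\delta_1(H)>d_k(n,m)\ge d_k(n,(k-1)^2)$.

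So fix a maximum matching $M$ with $|M|=m$, and let $U_i=V_i\setminus V(M)$. Then $|U_i|=n-m\ge (k^7-1)m$, and every edge of $H$ meets $V(M)$. Take $k$ uncovered vertices $u_1\in U_1,\dots,u_k\in U_k$; more usefully, take a family of $\Theta(n)$ disjoint such transversals and average over it. We then double count $\sum_i d_H(u_i)$ according to how each counted edge meets $V(M)$.

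For an edge $e=\{v_1,\dots,v_k\}\in M$, we record which $v_j$ admit ``private'' edges. These are edges through $u_i$ that meet $V(M)$ only in $v_j$ and otherwise use uncovered vertices. By maximality of $M$, two vertices $v_j,v_{j'}$ of the same $e$ cannot both have private edges with disjoint uncovered parts, since replacing $e$ by these two edges would enlarge $M$. The large ratio $n/m\ge k^7$ lets us choose such disjoint uncovered parts whenever the private degrees are not tiny. So, up to an error of order $m^2n^{k-3}$, essentially at most one vertex per matching edge (call it $w_e$) carries edges whose remaining vertices are uncovered. All other edges of the $u_i$ meet $V(M)$ in at least two vertices, so there are at most $O(m^2 n^{k-3})$ of them.

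Let $a_i$ be the number of distinguished vertices $w_e$ lying in $V_i$, so that $\sum_i a_i\le m$. The degree of each $u_i$ is then at most the degree of a vertex of $U_i$ in $H_k(n;a_1,\dots,a_k)$, plus the lower-order error term.

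It remains to optimise over integer vectors $(a_i)$ with $\sum a_i=m$. The minimum vertex degree of $H_k(n;a_1,\dots,a_k)$ is attained at an uncovered vertex of the class with the largest $a_i$. For each $i$, this degree equals $n^{k-1}-\prod_{j\neq i}(n-a_j)$. We need to show that the minimum over $i$ is maximised, given $\sum a_i=m$, by the configuration $(a_i)$ in the definition of $H'_k(n;m)$, which puts $r+1$ in as many classes as possible. This yields $\delta_1(H)\le d_k(n,m)$, the desired contradiction.

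The main obstacle is making the error terms vanish exactly rather than asymptotically. The bound $\delta_1(H)>d_k(n,m)$ is an exact statement, so the $O(m^2 n^{k-3})$ slack must be absorbed. My first attempt would follow Lo--Markstr\"om: choose $u_i$ of minimum degree in the relevant classes, use a finer case analysis when some $e\in M$ has two vertices with substantial private degree, and exploit the gap between $d_k(n,m)$ and the second-best configuration, which is of order $n^{k-2}$, much larger than the error when $n\ge k^7m$. The condition $m\ge (k-1)^2$ is needed precisely to make the optimisation over $(a_i)$ land on $H'_k(n;m)$ rather than on a balanced configuration.
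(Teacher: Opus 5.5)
\paragraph{Verdict.} Your top-level move is the same as the paper's. The paper gives no argument for this lemma and simply invokes Theorem~1.6 of Lo--Markstr\"om, so as a citation your proposal is fine. The reconstruction you add as a safeguard does not prove the statement, however, and the step you flag as the main obstacle fails.

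\paragraph{Where the reconstruction fails.} You count the edges at $u_i$ that avoid the distinguished vertices $w_e$ (notably those meeting $V(M)$ at least twice) as an additive error $O(m^2n^{k-3})$, and you propose to absorb it into a gap of order $n^{k-2}$. This cannot work, for three reasons.
\begin{enumerate}
\item When the distinguished vector $(a_i)$ is itself the $H'_k(n;m)$ vector, there is no gap at all. This is exactly the extremal situation you must exclude. You then only get $\delta_1(H)\le d_k(n,m)+O(m^2n^{k-3})$, which contradicts nothing.
\item The gap is not of order $n^{k-2}$. All vectors with maximum entry $\lceil m/k\rceil$ have the same $n^{k-2}$-coefficient, so they differ only at order $n^{k-3}$. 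For $k=3$ and $m=3r+1$, the $H'$ vector $(r+1,r+1,r-1)$ and the balanced vector $(r,r,r+1)$ give minimum degrees $n^2-(n-r)^2+1$ and $n^2-(n-r)^2$.
\item Even an $n^{k-2}$ gap would not beat $m^2n^{k-3}$ once $m\gg\sqrt n$. The hypothesis $n\ge k^7m$ allows this, and it is exactly how the paper uses the lemma, with $m=kc+s$ and $c=\lceil k\sqrt{\varepsilon}\,n\rceil$.
\end{enumerate}

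\paragraph{Missing idea.} What is missing is an exact count in the main case. Call $w\in e\in M$ heavy if more than $k^2n^{k-2}$ edges contain $w$ together with only uncovered vertices. Two heavy vertices in the same $e$ give an augmentation.

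Suppose every $e\in M$ has a heavy vertex $w_e$, and take any edge $f$ avoiding $W=\{w_e\}$. It meets some matching edges $e_1,\dots,e_j$. Replace these by $f$ together with pairwise disjoint such edges at $w_{e_1},\dots,w_{e_j}$ that avoid $f$. This gives $m+1$ disjoint edges. Hence $H\subseteq H_k(n;a)$ with no error term at all, and $\delta_1(H)\le d_k(n,m)$ follows from the optimisation.

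The case where some matching edge has no heavy vertex needs a separate sharp argument. There the relevant gap is again only $\Theta(n^{k-3})$; for example $d_3(n,7)=4n-3$ while $d_3(n,6)=4n-4$. Your sketch does not address this case.

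\paragraph{Side claims that are also wrong.}
\begin{itemize}
\item The hypothesis $m\ge(k-1)^2$ is not what makes the optimisation land on $H'_k(n;m)$. The $H'$ vector is never worse than the balanced one, since it has the same maximum entry and more concentrated remaining entries. Some lower bound on $m$ is nevertheless necessary. For $k=3$ and $m=1$, the intersecting family $\{x_1^jx_2^1x_3^1\}\cup\{x_1^1x_2^jx_3^1\}\cup\{x_1^1x_2^1x_3^j\}$ has $\delta_1=1>0=d_3(n,1)$ and $\nu=1$. Every edge other than the matching edge meets $V(M)$ twice, which is precisely the kind of edge your error term discards.
\item Your greedy fallback for $m'<(k-1)^2$ only reaches about $k-2$ edges. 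An uncovered vertex may have degree about $(k-1)m'n^{k-2}$, while $d_k(n,(k-1)^2)\approx(k-1)(k-2)n^{k-2}$.
\item $H'_k(n;m')$ need not embed in $H'_k(n;m)$. For $k=3$, $m'=6$ and $m=7$, the vectors are $(2,2,2)$ and $(3,3,1)$. Monotonicity of $d_k(n,\cdot)$ does hold, but it follows from the optimisation, not from a subgraph relation.
\end{itemize}
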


Recall that \(V_1,\ldots,V_k\) are the partition classes of
\(H_k^0(n;n)\) and   for each \(i\in[k]\),  \(W_i\in  {V_i\choose \lfloor (n-i+1)/k \rfloor}\). 
Define
$$
W=\bigcup_{i=1}^k W_i \quad \text{and} \quad X_i=V_i\setminus W_i \;\text{for all }i\in [k].
$$
Observe that $|W|=n$. Moreover, every edge of $H_k^0(n;n)$ intersects $W$ but is not fully contained in $W$.
Now let $H$ be a $k$-partite $k$-graph with vertex set $V\big(H_k^0(n;n)\big)$.
A vertex \(v\in V(H)\) is called \textit{\(\alpha\)-bad }if
$
|N_{H_k^0(n;n)}(v)\setminus N_H(v)|>\alpha n^{k-1};
$ otherwise \(v\) is called \textit{\(\alpha\)-good.}

\begin{lemma}\label{alpha_good}
    Let $k\ge 4$ be an integer, and let $0<\alpha \le 1/(k!5^k)$. Suppose $H$ is a
$k$-partite $k$-graph on $V(H^0_k(n;n))$
and every vertex of $H$ is $\alpha$-good. Then $H$ contains a perfect matching.
\end{lemma}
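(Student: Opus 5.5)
We will build the perfect matching explicitly. Every edge will contain exactly one vertex of $W$ and $k-1$ vertices from the sets $X_j$. Since $|W|=n$ and we need exactly $n$ edges, such a matching is perfect exactly when each $w\in W_i$ is matched with one vertex from each $X_j$, $j\ne i$. Note also that $|X_j|=n-|W_j|=\sum_{i\ne j}|W_i|$, so the counts balance: the vertices of $X_j$ are used exactly by the edges rooted at $W\setminus W_j$. Thus the task is to partition each $X_j$ into parts $X_j^{(i)}$, $i\ne j$, with $|X_j^{(i)}|=|W_i|$. We then find, for each $i$, a perfect matching of the $k$-partite subhypergraph $H_i$ induced on $W_i\times\prod_{j\ne i}X_j^{(i)}$, using only edges of type ``one vertex in $W_i$, the rest in $X$''. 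These edges are all present in $H_k^0(n;n)$.

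The first step is to choose the partition at random: split each $X_j$ uniformly at random into parts of the prescribed sizes $|W_i|\approx n/k$. Consider a vertex $v$ of $H_i$. If $v=w\in W_i$, its link in $H_k^0(n;n)$ restricted to $\prod_{j\ne i}X_j$ is complete, and $w$ misses at most $\alpha n^{k-1}$ of these $(k-1)$-tuples. Hence $w$ misses at most $\alpha n^{k-1}$ tuples of $\prod_{j\ne i}X_j^{(i)}$, which is a set of size about $(n/k)^{k-1}$. Deterministically, $w$ has degree at least $(1-\alpha k^{k-1}-o(1))(n/k)^{k-1}$ in $H_i$.

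If $v=x\in X_j^{(i)}$, its link in $H_k^0(n;n)$ contains all tuples with the $W_i$-coordinate in $W_i$ and the remaining coordinates in the $X$'s, so again at most $\alpha n^{k-1}$ of them are missing. The difficulty here is that the missing tuples could concentrate on the random parts $X_{j'}^{(i)}$. Rather than a concentration argument, we will bound this crudely. The number of missing tuples inside $W_i\times\prod_{j'\ne i,j}X_{j'}^{(i)}$ is at most $\alpha n^{k-1}$, while the full box has size about $(n/k)^{k-1}$. So every vertex of $H_i$ misses at most $\alpha k^{k-1}(1+o(1))$ times the maximum possible degree. Since $\alpha\le 1/(k!5^k)$, we have $\alpha k^{k-1}\le k^{k-1}/(k!5^k)$, which is small (for $k\ge4$, at most $64/(24\cdot 625)<0.005$). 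Therefore every $H_i$ is a balanced $k$-partite $k$-graph with class size $m_i=|W_i|\approx n/k$ and $\delta_1(H_i)\ge(1-c)m_i^{k-1}$ for some small $c$, deterministically and for any partition. This means no random choice is actually needed, and any equipartition will do.

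The final step, and the main obstacle, is to show that a balanced $k$-partite $k$-graph with $\delta_1\ge (1-c)m^{k-1}$ and $c<1/(k!\,5^k)\cdot k^{k-1}$ has a perfect matching. For large $m$ this follows from Lemma~\ref{partial_matching} applied with $m$ replaced by $m-1$: we have $d_k(m,m-1)\approx(1-(1-1/k)^{k-1})m^{k-1}$, and $(1-1/k)^{k-1}\ge e^{-1}$ exceeds $c$. However, that lemma needs $m\ge k^7(m-1)$, which fails. We will therefore argue directly instead, with a greedy argument plus a switching step. We take a maximum matching $M$ in $H_i$ and suppose it misses vertices $u_1,\dots,u_k$, one in each class. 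A vertex $u_1$ has at least $(1-c)m^{k-1}$ edges. Counting edges from the $u_\ell$ into tuples formed from vertices of $M$-edges, and using that $kc$ is far below $1$, we find two edges of $M$ and disjoint augmenting edges through the uncovered vertices, which increases $|M|$. This is a standard averaging argument. The explicit constant $1/(k!5^k)$ suggests that the intended proof counts $k$-tuples of $M$-edges, giving a factor $k!$, and uses a union bound with error $5^{-k}$; we will follow that route, checking that the failure probability is below $1$.
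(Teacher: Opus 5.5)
Your reduction is valid and differs from the paper's proof. Because $|X_j|=\sum_{i\ne j}|W_i|$, any split $X_j=\bigcup_{i\ne j}X_j^{(i)}$ with $|X_j^{(i)}|=|W_i|$ reduces the problem to perfect matchings in the vertex-disjoint boxes $H_i$. You correctly get $\delta_1(H_i)\ge m_i^{k-1}-\alpha n^{k-1}$ for every such split, and the condition that each edge meets $W$ exactly once becomes automatic. (The paper instead takes a maximum matching $M$ in $H$ among edges meeting $W$ once, uses an uncovered $w\in W_j$ to get $|M|>n/2$, and then switches on $(k-1)$-subsets of $M$.)

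The gap is the final step, which carries all the difficulty and is not proved. The switching you describe, trading two edges of $M$, cannot be forced by a minimum vertex-degree condition once $k\ge 4$. With $u_1,\dots,u_k$ fixed, every replacement edge through $u_\ell$ lies in $\prod_s\{u_s,a_s,b_s\}$ for some pair $\{A,B\}\subseteq M$. So there are only $O(m^2)$ such tuples, while $u_\ell$ may miss up to $\alpha n^{k-1}=\Theta(m^{k-1})$ tuples; the hypothesis gives no control over them. The same holds for any switch through fewer than $k-1$ edges of $M$. You must switch on $(k-1)$-sets $Q\subseteq M$. A Latin square of order $k$ re-partitions $\{u_1,\dots,u_k\}\cup V(Q)$ into $k$ edges, each containing one $u_\ell$ and one vertex of every edge of $Q$. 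Maximality forces one of these to be missing, and a missing one determines $Q$, so there are at least $\binom{|M|}{k-1}$ non-edges at the $u_\ell$. Your closing remark about a union bound with error $5^{-k}$ is not an argument. The paper's constant comes only from $(n/2-n/k-1)^{k-1}>\alpha n^{k-1}$ and $\frac1k\binom{n/2}{k-1}\ge\alpha n^{k-1}$.

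Even with the correct switching, your decision that ``any equipartition will do'' costs a factor of about $k^{k-1}$. Inside $H_i$ the missing fraction is $c\approx\alpha k^{k-1}$, not $\alpha$. The two available counts are $(m-|M|)^{k-1}\le c\,m^{k-1}$ (from an uncovered vertex) and $\binom{|M|}{k-1}\le kc\,m^{k-1}$ (from the switching). They contradict each other only if $(1-c^{1/(k-1)})^{k-1}>k!\,c$, and at $\alpha=1/(k!5^k)$ we have $k!\,c\approx k^{k-1}/5^k$.

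The inequality holds for $k=4,5$. For $k=4$: $c\lesssim 0.0043$, $|M|\ge 0.83m$, and $\binom{|M|}{3}>0.09m^3>4cm^3$. So your route closes in the case the main theorem uses, but it fails for every $k\ge 6$ (at $k=6$ the two sides are about $0.26$ and $0.50$).

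To prove the lemma as stated for all $k\ge4$, you have two options:
\begin{enumerate}[label=(\roman*)]
\item Keep the random partition you discarded. After a concentration argument it limits the loss to a factor $O(k)$: about $e$ for vertices of $W_i$ and $ek$ for vertices of $X_j^{(i)}$.
\item Drop the reduction and run the augmenting argument directly in $H$, as the paper does. There the missing fraction stays $\alpha$ relative to $n^{k-1}$, and the uncovered parts of the $X_i$ have linear size.
\end{enumerate}
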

\begin{proof}
    Let $M$ be a largest matching in $H$ consisting of edges with exactly one vertex in $W$.
If $|M|= n$ then $H$ has a perfect matching; so we may assume that $|M|<n$. Since \(|W|=n\), there exists a vertex \(w\in W\setminus V(M)\).  Suppose that
\(w\in V_j\).

We first show that $|M|> n/2$. Assume, to the contrary, that $|M|\le n/2$. Then $|X_i\setminus V(M)|\ge n/2-n/k-1$ for each $i\in[k]$. By the maximality of \(M\), no edge of \(H\) contains \(w\) together with one
vertex from each set \(X_i\setminus V(M)\), \(i\in[k]\setminus\{j\}\).  On the
other hand, every such legal set is an edge of \(H_k^0(n;n)\).  Therefore
\[
|N_{H_k^0(n;n)}(w)\setminus N_H(w)|
\ge
\prod_{i\in[k]\setminus\{j\}} |X_i\setminus V(M)|
\ge
(n/2-n/k-1)^{k-1}
>
\alpha n^{k-1},
\]
where the last inequality holds since
\(\alpha\le 1/(k!5^k)\).  This contradicts the assumption that \(w\) is
\(\alpha\)-good.  Hence $|M|> n/2$.

Next we choose vertices $v_i\in X_i\setminus V(M)$ for every $i\in[k]\setminus j$. Indeed, let
$
U_i^W=|W_i\setminus V(M)|,
U_i^X=|X_i\setminus V(M)|.
$
Since each edge of \(M\) contains exactly one vertex of \(W\), we have
$\sum_{i=1}^k U_i^W=n-|M|$.
Moreover, for each $i\in[k]$, we have
$U_i^W+U_i^X=n-|M|$.
As \(w\in W_j\setminus V(M)\), we have \(U_j^W>0\).  If \(U_i^X=0\) for some
\(i\neq j\), then \(U_i^W=n-|M|\), forcing all uncovered vertices of \(W\) to
lie in \(W_i\), a contradiction.  Thus \(U_i^X>0\) for every \(i\neq j\), as
required.

 Let $S=\{v_i:i\in[k]\setminus j\}\cup\{w\}$.
 Let \(Q\subseteq M\) be any \((k-1)\)-set of matching edges.  If $H[S\cup \bigcup_{e\in Q} e]$ contains a perfect matching consisting of edges with exactly one vertex
 in \(W\), then replacing the \(k-1\) edges of \(Q\) by this perfect matching
 would give a larger matching of the same type, contradicting the maximality of
 \(M\).  Hence, for every such \(Q\), at least one legal crossing edge from
 \(H_k^0(n;n)\) is missing from \(H\).
 More precisely, for every \((k-1)\)-set \(Q\subseteq M\), there exists an edge
 $
 f_Q\in E(H_k^0(n;n))\setminus E(H)
 $
 such that \(f_Q\) contains exactly one vertex from \(S\), exactly one vertex
 from each edge of \(Q\), and exactly one vertex of \(W\). 
 
 Since $|M|> n/2$, the number of choices for \(Q\) is
 $
 \binom{|M|}{k-1}>\binom{n/2}{k-1}.
 $
 Therefore more than \(\binom{n/2}{k-1}\) missing edges of \(H_k^0(n;n)\) meet
 \(S\).  Since \(|S|=k\), there exists \(u\in S\) such that
 \[
 |N_{H_k^0(n;n)}(u)\setminus N_H(u)|
 >
 \frac1k\binom{n/2}{k-1}
 \ge
 \alpha n^{k-1},
 \]
 contradicting that \(u\) is \(\alpha\)-good.  This  proves
 the lemma.
\end{proof}

\begin{theorem}\label{close_main}
    Let $k\ge 4$ be an integer, and let $0<\varepsilon<\min\{1/k^{20},1/(5^{2k+1}k!^2)\}$. The following holds for all
    sufficiently large $n$.  Let \(H\) be a \(k\)-partite \(k\)-graph on
    \(V(H_k^0(n;n))\).  Suppose that \(H\) is \(\varepsilon\)-close to
    \(H_k^0(n;n)\) and
    \[
    \delta_1(H)>d_k(n,n-1).
    \]
    Then \(H\) contains a perfect matching.
\end{theorem}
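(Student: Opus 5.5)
Our plan is the standard close-case strategy: remove the few bad vertices with a small matching, then apply Lemma~\ref{alpha_good} to what remains. Fix $\alpha$ with $\varepsilon\ll\alpha\le 1/(k!5^k)$; concretely $\alpha=\sqrt{\varepsilon}\cdot 5^k k!$ is allowed by the hypothesis on $\varepsilon$. Let $B$ be the set of $\alpha$-bad vertices of $H$. Since $H$ is $\varepsilon$-close to $H_k^0(n;n)$, each missing edge is counted by at most $k$ vertices, so $|B|\alpha n^{k-1}\le k\varepsilon n^k$. Hence $|B|\le k\varepsilon n/\alpha\le \sqrt{\varepsilon}n$, which is tiny.

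We then cover $B$ greedily by a matching $M_1$ of size $|B|$, with each edge containing exactly one bad vertex. The edges are chosen so that the $W$/$X$ balance of the leftover graph can be restored. A bad vertex $b$ still has $d_H(b)>d_k(n,n-1)\approx(1-(3/4)^{k-1}\cdots)n^{k-1}$. Since $|V(M_1)|$ and $|B|$ are $O(\sqrt\varepsilon n)$, $b$ has $\Omega(n^{k-1})$ edges avoiding $B\cup V(M_1)$. These edges are mostly through good vertices.

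The real issue is the divisibility structure. In $H_k^0(n;n)$, a perfect matching uses edges with exactly one vertex in $W$, and $|W|=n$ is exactly tight. After removing $M_1$, the remaining graph $H'$ on $n'=n-|M_1|$ per class must have its $W$-part of size exactly $n'$, i.e.\ every edge of $M_1$ must contain exactly one $W$-vertex. For each bad $b\in X_i$ we therefore need an edge $b$ plus exactly one $W$-vertex plus $X$-vertices. For each bad $b\in W_i$ we need an edge whose other $k-1$ vertices lie in $X$.

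If some bad vertex admits no such edge of the right type, its degree is essentially concentrated on the wrong types. Those are edges with $\ge 2$ vertices in $W$ for $b\in X$, or edges with $\ge1$ further $W$-vertex for $b\in W$. In either case we must instead compensate by choosing edges elsewhere with two or zero $W$-vertices. This is exactly where the degree threshold $d_k(n,n-1)$ enters. The main obstacle is to show that the minimum degree condition $\delta_1(H)>d_k(n,n-1)$ forces, globally, a small matching whose $W$-intersections can be balanced. Equivalently, we need some edge meeting $W$ in $0$ or $\ge 2$ vertices, used suitably.

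For this obstacle I would first try Lemma~\ref{partial_matching}. Consider the $k$-graph of edges of $H$ inside $W$ plus those with $\ge2$ vertices in $W$. Also consider the complementary structure on $X$: if $H$ has no edge entirely in $X$ and no usable matching of edges with $\ge2$ $W$-vertices, then $H$ is contained in a copy of $H'_k(n;n-1)$-type structure. Applying Lemma~\ref{partial_matching} with $m=n-1$ to a suitable auxiliary graph (or its small-$m$ version, with $m\ge (k-1)^2$ and $n\ge k^7 m$, after shrinking to a subset) gives a matching of the needed size. This lets us fix the surplus/deficit of $W$-vertices among the uncovered vertices. The matching produced is adjusted so its total contribution makes $|W\setminus V(M)|=n-|M|$, and it is chosen disjoint from $B$-covering edges by greediness, since all sets involved are $O(\sqrt\varepsilon n)$.

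Finally, in $H''=H-V(M_1\cup M_2)$ every vertex is good, since we removed all bad ones and lost only $O(\sqrt{\varepsilon})n^{k-1}\le \alpha' n^{k-1}$ further edges per vertex. After re-choosing the sets $W_i$ inside the remaining classes (sizes still within the floors as required, possibly moving $O(1)$ vertices, or arguing the proof of Lemma~\ref{alpha_good} only needs $|W|=n'$), $H''$ is close to $H_k^0(n';n')$ with all vertices $2\alpha$-good. Lemma~\ref{alpha_good} then yields a perfect matching of $H''$, which together with $M_1\cup M_2$ is a perfect matching of $H$.
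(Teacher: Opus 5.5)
Your skeleton (cover the bad vertices with a small matching, repair the $W$-count, finish with Lemma~\ref{alpha_good}) matches the paper's. The gap is that the step you yourself call the main obstacle is never carried out, and that step is where all the content of $\delta_1(H)>d_k(n,n-1)$ lies.

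Lemma~\ref{partial_matching} cannot be applied with $m=n-1$, since it needs $n\ge k^7m$. ``A suitable auxiliary graph after shrinking to a subset'' is exactly what has to be specified and checked. The paper does it as follows.
It writes $n-1=rk+s$ and sets $c=\lceil k\sqrt{\varepsilon}n\rceil$. It deletes a set $W'\subseteq W$ of $r-c$ vertices from each $W_i$, chosen so that the retained set $B_0=W\setminus W'$, of size $kc+s+1$, contains every bad vertex of $W$. Each vertex loses at most $n^{k-1}-(n-r+c)^{k-1}$ edges, and the explicit formula \eqref{eq:dk} gives the exact identity
\[
d_k(n,rk+s)-\bigl(n^{k-1}-(n-r+c)^{k-1}\bigr)=d_k(n-r+c,\,kc+s).
\]
So Lemma~\ref{partial_matching} applies to $H-W'$ and yields a matching $M_1$ of size $kc+s+1=|B_0|$ avoiding $W'$. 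Because $M_1$ uses at most $|M_1|$ vertices of $W$, the leftover graph has at least as many $W$-vertices as its class size. From then on there is only ever a surplus, which is removed with edges containing exactly two $W$-vertices; these are plentiful, since they are edges of $H_k^0(n;n)$ and the relevant vertices are good.

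Your description of the obstacle also points the wrong way. Edges meeting $W$ in $\ge 2$ vertices can only cure a surplus. The dangerous case is a deficit, for example a bad $w\in W$ all of whose edges contain a second $W$-vertex, which is exactly what happens in $H'_k(n;n-1)$. A deficit can only be cured by edges avoiding $W$, and these need not exist. So covering $B$ greedily first and ``compensating elsewhere'' afterwards can fail. Your claim that having no edge inside $X$ and no usable matching of edges with $\ge 2$ $W$-vertices forces an $H'_k(n;n-1)$-type structure is unjustified.

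In the paper the order is reversed: $M_1$ comes first. Then the remaining bad vertices with many edges meeting $W'$ in exactly one vertex are covered greedily by such edges. Those without such edges (the set $B_2$) are covered by edges avoiding $W'$. The existence of these edges is a second explicit computation from $\delta_1(H)>d_k(n,n-1)$, and your sketch has no counterpart to it.

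Two smaller slips. First, $\alpha=\sqrt{\varepsilon}\,5^kk!$ does not satisfy $\alpha\le 1/(k!5^k)$ under the hypothesis; you only get $\alpha<1/\sqrt5$. Take $\alpha=\sqrt{\varepsilon}$, which gives $|B|\le k\sqrt{\varepsilon}n$ and $2\alpha<1/(k!5^k)$. Second, re-choosing the $W_i$ by moving vertices between $W$ and $X$ does not preserve goodness, and the per-class imbalance can be of order $\sqrt{\varepsilon}n$, not $O(1)$. Either use your fallback, that the proof of Lemma~\ref{alpha_good} only needs $|W|=n'$ and $|W_i|\approx n'/k$, or rebalance with an extra matching as the paper does.
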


\begin{proof}
Let $\alpha=\sqrt{\varepsilon}$. 
Let \(B\) be the set of all \(\alpha\)-bad vertices.
Since $H$ is $\varepsilon$-close to $H^0_k(n;n)$, we have
 $|B|\le k\sqrt{\varepsilon}\,n$.

Write $n-1=rk+s$ with $1\le s\le k$, and let
$c=\lceil k\sqrt{\varepsilon}\,n\rceil$.
Choose a subset $ B_0\subseteq W$ such that $B\cap W\subseteq B_0$ and $|W_i\setminus B_0|=r-c$ for every $i\in[k]$. Let $W'=W\setminus B_0$ and $H'=H-W'$. Note that $\frac{rk+s}{r+1}\ge k-1$ and $\frac{ck+s}{c+1}\ge k-1$. Then by \eqref{eq:dk}, we have
\begin{align*}
    \delta_1(H')&\ge \delta_1(H)-\bigl(n^{k-1}-(n-r+c)^{k-1}\bigr)\notag\\
    &>d_k(n,rk+s)-\bigl(n^{k-1}-(n-r+c)^{k-1}\bigr)\notag\\
    &=\Bigl[n^{k-1}-(n-r-1)^{k-2}(rk-r+k)\Bigr]
  -\Bigl[n^{k-1}-(n-r+c)^{k-1}\Bigr]\notag\\
&=(n-r+c)^{k-1}-(n-r-1)^{k-2}(rk-r+k)\notag\\
&=d_k(n-r+c,kc+s).
\end{align*}
Since \(\varepsilon\le k^{-20}\) and \(n\) is sufficiently large, applying Lemma~\ref{partial_matching}
to \(H'\), we obtain a matching \(M_1\) in \(H'\) of size $kc+s+1$.

Let $H_1=H-V(M_1)$. For $v\in B\setminus V(M_1)$, let $v\in B_1$ if and only if \(H_1\)
contains at least \(\varepsilon^{1/4}n^{k-1}\) edges containing \(v\) and
meeting \(W'\) in exactly one vertex.  Let $B_2=B\setminus (V(M_1)\cup B_1)$.

Since \(|B_1|\le |B|\le k\sqrt{\varepsilon}n\) and \(\varepsilon^{1/4}n^{k-1}\gg k|B_1|n^{k-2}\), a greedy construction yields a matching \(M_2\) in \(H_1\) that covers \(B_1\) and has exactly one vertex in \(W'\) per edge.
Let $H_2=H_1-V(M_2)$. Next we cover \(B_2\) by a matching whose edges are disjoint from \(W'\). By the definition of $B_2$, for each $v\in B_2$, the number of edges containing $v$ and disjoint from $W'$ is at least
\begin{align*}
    &\delta_1(H)-k|M_1\cup M_2|n^{k-2}-\varepsilon^{1/4}n^{k-1}-\Bigl(n^{k-1}-(n-r+c)^{k-1}-(k-1)(r-c)n^{k-2}\Bigr)\\
    >&\Bigl(n^{k-1}-(n-r-1)^{k-2}(rk-r+k)\Bigr)-\Bigl(n^{k-1}-(n-r+c)^{k-1}-(k-1)(r-c)n^{k-2}\Bigr)-2\varepsilon^{1/4}n^{k-1}\\
    =&(n-r+c)^{k-1}+(k-1)(r-c)n^{k-2}-(n-r-1)^{k-2}(rk-r+k)-2\varepsilon^{1/4}n^{k-1}\\
    >&(n-r)^{k-1}+(k-1)(r-c)n^{k-2}-(n-r)^{k-2}(n-r+k)-2\varepsilon^{1/4}n^{k-1}\\
    >&\varepsilon^{1/4}n^{k-1}.
\end{align*}
Thus another greedy argument gives a
matching \(M_3\) in \(H_2\) which covers \(B_2\) and whose edges are disjoint
from \(W'\).

Let $H_3=H_2-V(M_3)$ and $n_1=n-|M_1\cup M_2\cup M_3|$.
It remains to adjust the number of remaining vertices of \(W'\).  Since
\(M_1\) and \(M_3\) are disjoint from \(W'\), and every edge of \(M_2\) meets
\(W'\) in exactly one vertex, we have
$
|W'\cap V(H_3)|
=
k(r-c)-|M_2|.
$
Moreover, since \(|M_1|=kc+s+1\), this quantity is at least
\(n_1+|M_3|\).

First, choose a appropriate matching $M_4$ in $H_3$ of size at most $|M_1|+|M_3|$, such that $M_4$ covers $B_0\cap V(H_3)$ and every edge of $M_4$ contains exactly two vertices from $W\cap V(H_3)$, and, writing $H_4=H_3-V(M_4)$, we have $|W\cap V(H_4)|=n_1-|M_4|$.  
Next  we choose a matching \(M_5\) in \(H_4\) of 
size at most \(2k|M_1\cup M_2\cup M_3|\), such that every edge of \(M_4\) meets \(W'\) in
exactly one vertex, and in $H_5:=H_4-V(M_5)$, we have $|W\cap V_i\cap V(H_5)|=\left\lfloor\frac{n_2+i-1}{k}\right\rfloor$
for every $i\in[k]$, where $n_2=n_1-|M_4\cup M_5|$.

For every \(v\in V(H_5)\), every edge of \(H_k^0(n_2;n_2)\) missing
from \(H_5\) is also an edge of \(H_k^0(n;n)\) missing from \(H\).  Therefore
\[
|N_{H_k^0(n_2;n_2)}(v)\setminus N_{H_5}(v)|
\le
|N_{H_k^0(n;n)}(v)\setminus N_H(v)|
\le
\alpha n^{k-1}.
\]
Since
$n_2\ge n-4k^3\sqrt{\varepsilon}\,n$,
we have
$\alpha n^{k-1}\le 2\alpha n_2^{k-1}$
for \(n\) sufficiently large.  Thus every vertex of \(H_5\) is
\(2\alpha\)-good.  Since
$
2\alpha=2\sqrt{\varepsilon}\le 1/{k!5^k}$,
we apply Lemma~\ref{alpha_good} to \(H_5\) and obtain a perfect matching \(M_6\) in
\(H_5\).  Consequently,
$
M_1\cup M_2\cup M_3\cup M_4\cup M_5\cup M_6
$
is a perfect matching of \(H\), as required.
\end{proof}

\section{Proof of Theorem~\ref{main_theorem}}

\begin{proof}
	The lower bound follows from the extremal construction \(H'_4(n;n-1)\), which
	has minimum vertex degree \(d_4(n,n-1)\) and contains no perfect matching.
	Now we prove the upper bound.  Let \(H\) be a balanced \(4\)-partite \(4\)-graph
	with vertex classes \(V_1,V_2,V_3,V_4\), each of size \(n\), and suppose that
	$
	\delta_1(H)>d_4(n,n-1).
	$
	Choose \(\varepsilon>0\) sufficiently small so that  Theorems~\ref{not_close_main} and \ref{close_main} apply.  If \(H\) is \(\varepsilon\)-close to
	\(H_4^0(n;n)\), Theorem~\ref{close_main}, applied with \(k=4\), gives a
	perfect matching in \(H\).  Otherwise, \(H\) is not \(\varepsilon\)-close to
	\(H_4^0(n;n)\), and Theorem~\ref{not_close_main} gives a perfect matching in
	\(H\).
	
	Thus every balanced \(4\)-partite \(4\)-graph \(H\) with
	\(\delta_1(H)>d_4(n,n-1)\) contains a perfect matching.  Together with
	the lower bound, this gives
	$
	m'_1(4,n)=d_4(n,n-1)+1,
	$
	as claimed.
\end{proof}
\section*{Acknowledgements}
Hongliang Lu is supported by the National Key R\&D Program of China (No.~2023YFA1010203). Yan Wang is supported by the National Key R\&D Program of China (No.~2022YFA1006400) and the National Natural Science Foundation of China
(No.~12571376).
\bibliographystyle{plain}  
\bibliography{refs}
\end{document}